\theoremstyle{plain}
	\newtheorem{theorem}{Theorem}
	\newtheorem{proposition}[theorem]{Proposition}
	\newtheorem{lemma}[theorem]{Lemma}
	\newtheorem{corollary}[theorem]{Corollary}
\theoremstyle{definition}
	\newtheorem{example}[theorem]{Example}
	\newtheorem{definition}[theorem]{Definition}
\theoremstyle{remark}
\DeclareMathOperator{\STS}{STS}
\DeclareMathOperator{\mex}{mex}
\newcommand{\Z}{\mathbb{Z}}
\newcommand{\F}{\mathbb{F}}
\newcommand{\nofil}{\textsc{Nofil }}
\newcommand{\nofilpunct}{\textsc{Nofil}}
\newcommand{\nodekayles}{\textsc{Node Kayles }}
\newcommand{\nodekaylespunct}{\textsc{Node Kayles}}
\DeclareRobustCommand\onedot{\futurelet\@let@token\@onedot}
\def\@onedot{\ifx\@let@token.\else.\null\fi\xspace}
\def\ie{{i.e}\onedot}
\def\etal{{et al}\onedot}
\begin{document}
\begin{center}
  \uppercase{\bf The combinatorial game nofil % \nofil~ 
 played on Steiner Triple Systems}

\vskip 20pt
{\bf Melissa A. Huggan}\footnote{Supported by the Natural Sciences and Engineering Research Council of Canada (funding reference number PDF-532564-2019).}\\
{\small Department of Mathematics, Ryerson University\\Toronto, Ontario, Canada}\\
{\tt melissa.huggan@ryerson.ca}\\ 
\vskip 10pt
{\bf Svenja Huntemann}\footnote{Supported by the Natural Sciences and Engineering Research Council of Canada (funding reference number PDF-516619-2018).}\\
{\small Dept.\ of Math.\ and Phys.\ Sciences, Concordia University of Edmonton\\Edmonton, Alberta, Canada}\\
{\tt svenja.huntemann$@$concordia.ab.ca}\\ 
\vskip 10pt
{\bf Brett Stevens}\footnote{Supported by the Natural Sciences and Engineering Research Council of Canada (funding reference number RGPIN 06392).}\\% (grant \#4139-2014).}\\
{\small School of Mathematics and Statistics, Carleton University\\Ottawa, Ontario, Canada}\\
{\tt brett@math.carleton.ca}\\ 
\end{center}
\begin{abstract}
We introduce an impartial combinatorial game on Steiner triple systems called \nofilpunct. Players move alternately, choosing points of the triple system. If a player is forced to fill a block on their turn, they lose. We explore the play of \nofil on all Steiner triple systems up to order 15 and a sampling for orders 19, 21, and 25.  We determine the optimal strategies by computing the nim-values for each game and its subgames.  The game \nofil can be thought of in terms of play on a corresponding hypergraph. As game play progresses, the hypergraph shrinks and will eventually be equivalent to playing the game \textsc{Node Kayles} on an isomorphic graph. \textsc{Node Kayles} is well studied and understood.  Motivated by this, we study which \textsc{Node Kayles} positions can be reached, i.e.\ embedded into a Steiner triple system. We prove necessary conditions and sufficient conditions for the existence of such graph embeddings and conclude that the complexity of determining the outcome of the game \nofil on Steiner triple systems is PSPACE-complete.
\end{abstract}

Keywords: Steiner triple systems, combinatorial game theory, computational complexity, graph embedding.

\section{Introduction}
Many games are played on graphs. Focusing on particular graph families allows for investigating how graph properties and structures give insight into player strategies and whether a player can win the game. Playing games on combinatorial designs has been explored in several contexts. For example, Cops and Robbers, a pursuit-evasion game, was played on designs  in~\cite{BB2012}, where it was shown that extremal cop-numbers are attained in certain families of designs. More recently, it was shown in \cite{BHM20} that for Cops and Robbers with an invisible robber, a strategy for capture can be derived from the  structure of the design. Tic-Tac-Toe on a finite affine or projective plane has been studied~\cite{CarrollDougherty} and the outcome determined for every finite plane.  The first player can win in the affine planes of orders 2, 3, and 4 and the projective plane of order 2. The second player can force a draw in all other finite planes. Eric Mendelsohn coined a triple packing game in the 1980s which belongs to the class of impartial combinatorial games.  The triple packing game was explored in~\cite{Huggan2015}, and although the comprehensive Sprague-Grundy Theory of impartial games applies, it is presumed to be difficult. 

In this paper, we examine a game played on a block design which we call \nofil ({\bf N}ext {\bf O}ne to {\bf F}ill {\bf I}s the {\bf L}oser). The game ruleset is as follows.
\begin{definition}[\nofilpunct]
The \emph{board} is a block design. On their turn, a player chooses an unplayed point from the design which does not complete a block of {\em played} points. Chosen points are simply considered \emph{played}, regardless of the player who chose them. If all but one point from any block have been played, then the last point is unplayable. If every unplayed point is also unplayable, then no move is possible, the game ends, and the last player to move wins. 
\end{definition}

The set of played points at the end of the game form a maximal independent set in the sense of not containing any hyperedge (block) \cite{MR309807}. We note that the game can be played on any hypergraph but we are interested in showing that the more restricted universe of designs contains all the complexity of a similar game played on graphs.  We will focus on studying \nofil specifically on Steiner triple systems.
\begin{definition}[\cite{Handbook}]
A \emph{Steiner triple system} ($\STS(v)$) ($V$, $\mathcal{B}$) is a set $V$ of $v$ elements (points or vertices) together with a collection $\mathcal{B}$ of $3$-subsets (blocks or triples) of $V$ with the property that every $2$-subset of $V$ occurs in exactly one block $B \in \mathcal{B}$. The size of $V$ is the \emph{order} of the $\STS$.
\end{definition}
For \nofil played on a Steiner triple system, if two out of three points of a block have been played, the last point is unplayable. 
At any time during the game, the vertices/points can be divided into three groups:
\begin{itemize}
\item \textbf{Played}: Those that have been chosen already. We denote this set by $P$.  The rules of the game imply that for every block $B$ in the design we have $B \not\subseteq P$.
\item \textbf{Unplayable}: If all but one point in a block have been played, the last point is unplayable.  We denote the set of such points by $U$. 
\item \textbf{Available}: The set of remaining unplayed points, which are still playable, is denoted $A$.
\end{itemize}

On each turn, the set of played points, $P$, increases by one and the remaining points can be partitioned between those that are unplayable, $U$, because they appear on a triple with two already played points, and those that are available, $A$, which have at most one played point in every block on which they appear. The set $A$ decreases by at least one each turn, while $U$ is non-decreasing.  If we concentrate on just the playable points, we can restrict our attention to the hypergraph on $A$, inherited from the $\STS$, which encodes the available plays and any restrictions among points. Its hyperedges are those maximal subsets of blocks which consist of available points and which, if all played, would complete a block of played points. To demonstrate what these maximal subsets are, based on the sets $P$, $A$ and $U$, we first analyze a sample game play on an $\STS(9)$ in \cref{ex: example}, then discuss them in general.  Whenever possible, we denote a triple containing $x,y, z \in V$ by $xyz$, otherwise we will use set notation, $\{x,y,z\}$.

\begin{example}\label{ex: example}
Consider an $\STS(9)$, shown in \cref{Fig: STS9}, where the points are $V = \{1, 2, 3, 4, 5, 6, 7, 8, 9\}$ and the blocks are
\[\mathcal{B}=\{123, 456, 789, 147, 258,  369, 159, 267, 348, 168, 249, 357\}.\] 

%%%%%%%%%%%%%%%%%%%%%%%%%%%%%%%%%%%%%%%%%%%%%%%%%%%%
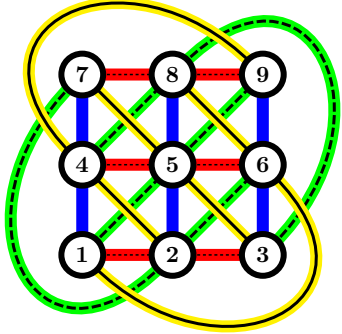
\begin{figure}[th!]
  \begin{center}
\scalebox{0.80}{
    \begin{tikzpicture}
      \tikzmath{\y1 = 0.5; \y2=1.5; }
      \node (001) at  (0*\y2,0*\y2) [circle,draw=black,line width=1.0mm] {};
      \node (101) at  (1*\y2,0*\y2) [circle,draw=black,line width=1.0mm] {};
      \node (201) at  (2*\y2,0*\y2) [circle,draw=black,line width=1.0mm] {};
      
      \node (011) at  (0*\y2,1*\y2) [circle,draw=black,line width=1.0mm] {};
      \node (111) at  (1*\y2,1*\y2) [circle,draw=black,line width=1.0mm] {};
      \node (211) at  (2*\y2,1*\y2) [circle,draw=black,line width=1.0mm] {};
      
      \node (021) at  (0*\y2,2*\y2) [circle,draw=black,line width=1.0mm] {};
      \node (121) at  (1*\y2,2*\y2) [circle,draw=black,line width=1.0mm] {};
      \node (221) at  (2*\y2,2*\y2) [circle,draw=black,line width=1.0mm] {};
\begin{scope}%[on background layer]
      \draw [-,cap=rect,line width=2mm,draw=green] (001) -- (111);
      \draw [-,cap=rect,line width=0.5mm,draw=black, dashed] (001) -- (111);
      \draw [-,cap=rect,line width=2mm,draw=green] (111) -- (221);
      \draw [-,cap=rect,line width=0.5mm,draw=black, dashed] (111) -- (221);
      \draw [-,cap=rect,line width=2mm,draw=green] (121) to[out=45,in=45,distance=2*\y2 cm]  (201);
      \draw [-,cap=rect,line width=2mm,draw=green] (011) -- (121);
      \draw [-,cap=rect,line width=0.5mm,draw=black, dashed] (121) to[out=45,in=45,distance=2*\y2 cm]  (201);
      \draw [-,cap=rect,line width=0.5mm,draw=black, dashed] (011) -- (121);
      \draw [-,cap=rect,line width=2mm,draw=green] (021) to[out=225,in=225,distance=2*\y2 cm]  (101);
      \draw [-,cap=rect,line width=2mm,draw=green] (101) -- (211);
      \draw [-,cap=rect,line width=0.5mm,draw=black,dashed] (021) to[out=225,in=225,distance=2*\y2 cm]  (101);
      \draw [-,cap=rect,line width=0.5mm,draw=black,dashed] (101) -- (211);

      \draw [-,cap=rect,line width=2mm,draw=yellow] (021) -- (111);
      \draw [-,cap=rect,line width=0.5mm,draw=black] (021) -- (111);
      \draw [-,cap=rect,line width=2mm,draw=yellow] (111) -- (201);
      \draw [-,cap=rect,line width=0.5mm,draw=black] (111) -- (201);
      \draw [-,cap=rect,line width=2mm,draw=yellow] (221) to[out=135,in=135,distance=2*\y2 cm]  (011);
      \draw [-,cap=rect,line width=2mm,draw=yellow] (011) -- (101);
      \draw [-,cap=rect,line width=0.5mm,draw=black] (221) to[out=135,in=135,distance=2*\y2 cm]  (011);
      \draw [-,cap=rect,line width=0.5mm,draw=black] (011) -- (101);
      \draw [-,cap=rect,line width=2mm,draw=yellow] (001) to[out=315,in=315,distance=2*\y2 cm]  (211);
      \draw [-,cap=rect,line width=2mm,draw=yellow] (121) -- (211);
      \draw [-,cap=rect,line width=0.5mm,draw=black] (001) to[out=315,in=315,distance=2*\y2 cm]  (211);
      \draw [-,cap=rect,line width=0.5mm,draw=black] (121) -- (211);

      \draw [-,cap=rect,line width=2mm,draw=red] (011) -- (111);
      \draw [-,cap=rect,line width=0.25mm,draw=black, dotted] (011) -- (111);
      \draw [-,cap=rect,line width=2mm,draw=red] (111) -- (211);
      \draw [-,cap=rect,line width=0.25mm,draw=black, dotted] (111) -- (211);
      \draw [-,cap=rect,line width=2mm,draw=red] (001) -- (101);
      \draw [-,cap=rect,line width=0.25mm,draw=black, dotted] (001) -- (101);
      \draw [-,cap=rect,line width=2mm,draw=red] (101) -- (201);
      \draw [-,cap=rect,line width=0.25mm,draw=black, dotted] (101) -- (201);
      \draw [-,cap=rect,line width=2mm,draw=red] (021) -- (121);
      \draw [-,cap=rect,line width=0.25mm,draw=black, dotted] (021) -- (121);
      \draw [-,cap=rect,line width=2mm,draw=red] (121) -- (221);
      \draw [-,cap=rect,line width=0.25mm,draw=black, dotted] (121) -- (221);
      
      \draw [-,cap=rect,line width=2mm,draw=blue] (001) -- (011);
      \draw [-,cap=rect,line width=2mm,draw=blue] (011) -- (021);
      \draw [-,cap=rect,line width=2mm,draw=blue] (101) -- (111);
      \draw [-,cap=rect,line width=2mm,draw=blue] (111) -- (121);
      \draw [-,cap=rect,line width=2mm,draw=blue] (201) -- (211);
      \draw [-,cap=rect,line width=2mm,draw=blue] (211) -- (221);
\end{scope}

      \node (001) at  (0*\y2,0*\y2) [circle,draw=black,fill=white, line width=1.0mm] {\bf 1};
      \node (101) at  (1*\y2,0*\y2) [circle,draw=black,fill=white, line width=1.0mm] {\bf 2};
      \node (201) at  (2*\y2,0*\y2) [circle,draw=black,fill=white, line width=1.0mm] {\bf 3};
      
      \node (011) at  (0*\y2,1*\y2) [circle,draw=black,fill = white, line width=1.0mm] {\bf 4};
      \node (111) at  (1*\y2,1*\y2) [circle,draw=black, fill = white,line width=1.0mm] {\bf 5};
      \node (211) at  (2*\y2,1*\y2) [circle,draw=black, fill = white,line width=1.0mm] {\bf 6};
      
      \node (021) at  (0*\y2,2*\y2) [circle,draw=black,fill = white,line width=1.0mm] {\bf 7};
      \node (121) at  (1*\y2,2*\y2) [circle,draw=black,fill = white,line width=1.0mm] {\bf 8};
      \node (221) at  (2*\y2,2*\y2) [circle,draw=black,fill = white,line width=1.0mm] {\bf 9};
    \end{tikzpicture}}
\caption{A Steiner triple system of order $9$.}\label{Fig: STS9}
  \end{center}
 \end{figure}

A game on the $\STS(9)$ could play out as summarized in \cref{ex: steiner 9}, where the columns $P$, $A$, and $U$ give the respective disjoint sets of points described above. The \emph{blocks} column shows the list of blocks of the design, where points within a block are overlined if they have been played. No block is allowed to have all three of its points overlined. The \emph{available hypergraph} column describes the restrictions of points within blocks. For example, ``$23$'' indicates that points $2$ and $3$ are on a block together, and both cannot be chosen because the third point in the block has been played. Consider a block $348$ and suppose $3$ becomes unplayable. Then on the next turn, the $48$ hyperedge is removed because there is no restriction on playing both $4$ and $8$, as the block can never be completed. The last column, \emph{hypergraph figure}, presents the modification to \cref{Fig: STS9} throughout the proposed game play. The greyed out vertices represent unplayable points. The blacked out vertices represent the played points. All games played on the unique $\STS(9)$ are equivalent to this sequence of play, up to isomorphism. In this example, after turn 4 the game is over and Player 2 has won the game. 
\renewcommand{\arraystretch}{1.5}
\begin{table}[ht!]
\begin{center}
\begin{tabular}{|c|c|c|c|c|c|c|}\hline
\multirow{2}{*}{Turn}&\multirow{2}{*}{$P$}&\multirow{2}{*}{$A$}&\multirow{2}{*}{$U$}&\multirow{2}{*}{Blocks}&Available&Hypergraph\\
&&&&& Hypergraph&Figure\\\hline\hline
 \multirow{4}{*}{$1$}&\multirow{4}{*}{$1$}&&&$\overline{1}23, 456, 789,$ &$23, 456, 789,$&\multirow{4}{*}{\includegraphics[width=25mm, height=25mm]{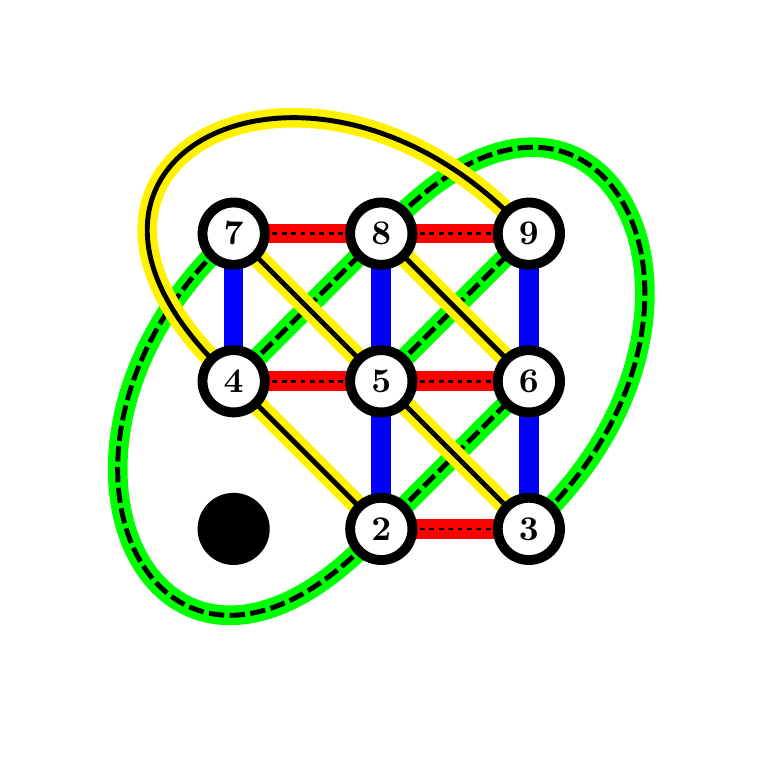}}
\\
&&$2, 3, 4, 5, $&&$ \overline{1}47, 258, 357,$&$ 47, 258, 357,$&\\
    &&$6, 7, 8, 9$&& $369, \overline{1}59, 267$&$369, 59, 267,$&\\%\hline
    &&&& $348, \overline{1}68, 249$&$ 348, 68, 249$&\\\hline%  $369, 59, 267, 348, 68, 249$\\\hline
 \multirow{4}{*}{$2$}&\multirow{4}{*}{$1, 2$}&&\multirow{4}{*}{$3$}&$\overline{1}\overline{2}3, 456, 789, $ &$456, 789,$&\multirow{4}{*}{\includegraphics[width=25mm, height=25mm]{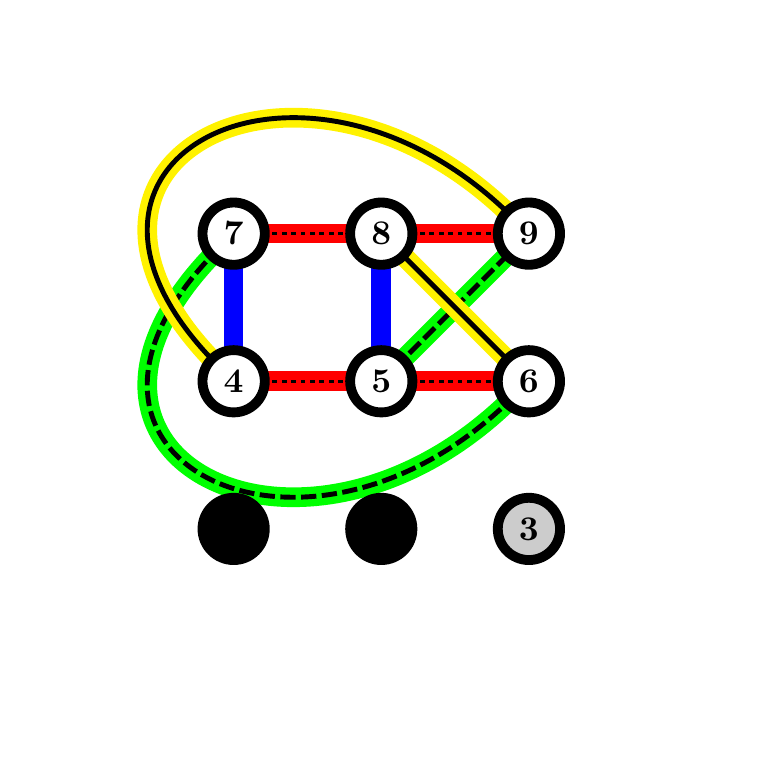}}\\
&&$4, 5, 6,$&&$\overline{1}47, \overline{2}58, 357,$& $ 47, 58,$&\\
    && 7, 8, 9&& $369, \overline{1}59, \overline{2}67,$&$59, 67,$& \\
&&&&$ 348, \overline{1}68, \overline{2}49$ &$ 68, 49$&\\\hline  
\multirow{4}{*}{$3$}&\multirow{4}{*}{$1, 2, 6$}&\multirow{4}{*}{$4, 5, 9$}&\multirow{4}{*}{$3, 7, 8$}&$\overline{1}\overline{2}3, 45\overline{6}, 789, $&\multirow{4}{*}{ $45, 59, 49$}&\multirow{4}{*}{\includegraphics[width=25mm, height=25mm]{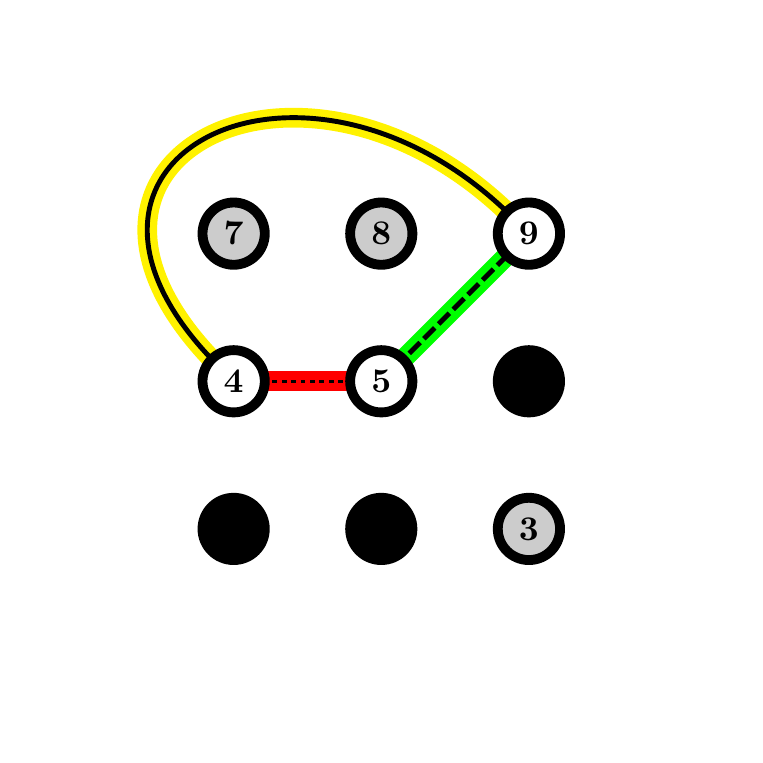}}\\
&&&&$\overline{1}47, \overline{2}58, 357,$ &&\\
    &&&& $3\overline{6}9, \overline{1}59, \overline{2}\overline{6}7,$&&\\
&&&&$ 348, \overline{1}\overline{6}8, \overline{2}49$&&\\\hline  
\multirow{4}{*}{$4$}&\multirow{4}{*}{$1, 2, 6, 4$}&&\multirow{4}{*}{$3, 7, 8, 5, 9$}&$\overline{1}\overline{2}3, \overline{4}5\overline{6}, 789,$ &&\multirow{4}{*}{\includegraphics[width=25mm, height=25mm]{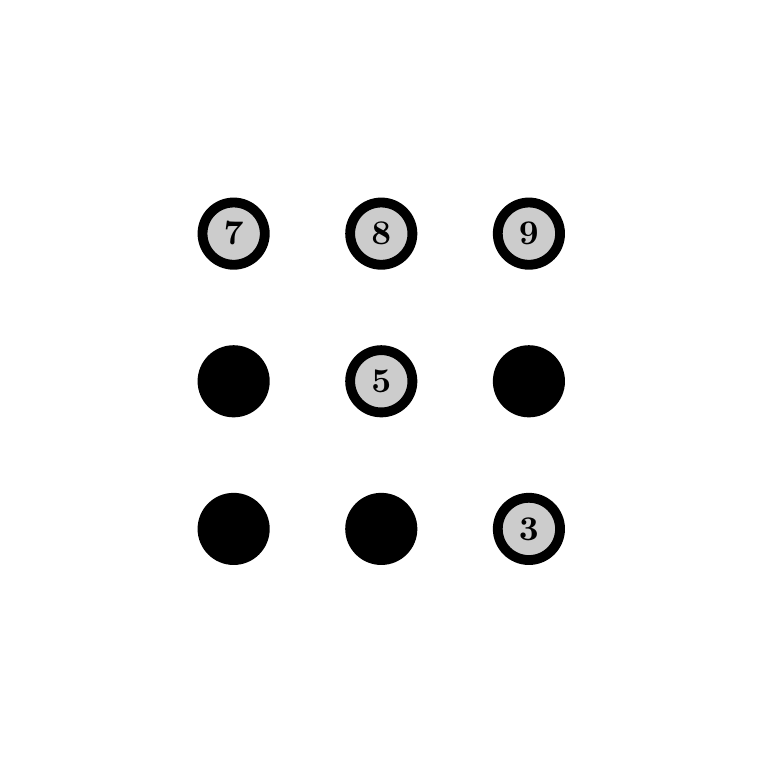}} \\
&&&&$ \overline{1}\overline{4}7, \overline{2}58, 357,$&&\\
&&&& $3\overline{6}9, \overline{1}59, \overline{2}\overline{6}7,$&& \\
&&&&$ 3\overline{4}8, \overline{1}\overline{6}8, \overline{2}\overline{4}9$&&\\\hline
\end{tabular}
\caption{Example of game play on an $\STS(9)$.}\label{ex: steiner 9}
\end{center}
\end{table}
\renewcommand{\arraystretch}{1}
\end{example}

When playing $\nofil$on a general $\STS(v)$, we first consider triples that contain unplayable points. If a triple $xyz$ has at least one unplayed point $x$ which is unplayable, then no restrictions are derived for the points $y$ and $z$ from that block. However, one or more of them could become unplayable based on restrictions caused by the play on other blocks. If there are no available points on the block, this is self evident. If there are available points on such a block, then since there is one unplayable point, no matter how play proceeds, this block can never possibly contain three played points.  Thus no subset of a block which contain at least one unplayable point is a hyperedge. 

If a block has no unplayable points, it can contain only played and available points.  Any two played points on a block determine that the third point on the block is unplayable. Thus blocks with two played points also do not contribute to the hyperedges.  The only remaining blocks to consider either contain three available points or two available points and a played point.  The former are included in their entirety as a hyperedge and the latter are included as a hyperedge of size two containing the two available points.  We call this the {\em available hypergraph}.  

Game theorists are interested in understanding the computational complexity of a game to get a measure of how difficult it will be to determine the winner. This is a decision problem as follows: \emph{Given a position $G$, can the first player win: yes or no?} Here, the position $G$ could be any possible position that can occur during game play or we may restrict to starting positions only. If the answer can be determined with a polynomial amount of storage (with respect to the size of the input information), then the problem is considered to be in PSPACE. PSPACE-hard problems are at least as hard as all the hardest problems in PSPACE. Note that there is no restriction on computation time, although a problem in P, which are those problems requiring a polynomial amount of time, only has time to access a polynomial amount of memory. A problem is PSPACE-complete if it is in PSPACE and is also PSPACE-hard. In other words, it is among the hardest problems in PSPACE. It is important to note that when studying the decision problem for starting positions only, the computational complexity might be ``simpler'' as optimal game play from a starting position could avoid the most difficult positions. For more background in complexity theory and how it relates to game theory, see~\cite{Demaine_09}.

Knowing that the game of interest, after a sequence of plays, is equivalent to another game for which we know the computational complexity tells us that the original game is at least as complex as the games embedded within it in this manner. When the available hypergraph is a graph, \nofil is equal to another game and we will focus on embedding it into \nofilpunct.  In the example, the available hypergraph is first isomorphic to a graph at the end of the third turn, where it is isomorphic to $K_3$.  Once the available hypergraph is a graph, it will continue to be a graph until the end of the game.

The combinatorial game played on an arbitrary graph, where players alternately choose a vertex that is not adjacent to a played vertex, and mark it as ``played'', is called \nodekaylespunct. An equivalent formulation of the game focuses on the available graph, which is the graph induced by the available points.  The second point on an edge with a played point is unavailable and thus the available graph is obtained by deleting a played vertex and all its neighbours.  \nodekayles has been well studied in combinatorial game theory. Determining the winning player in \nodekayles for a given graph is PSPACE-complete \cite{schaefer_78}. For many families of graphs, determining the winning player when starting on one of these graphs is a polynomial problem \cite{bodlaender_93,bodlaender_02,fleischer_06,bodlaender_11}. Note that the complexity for \nodekayles is the same when restricting to starting positions only as any position throughout the game is equivalent to a starting position by using the equivalent formulation.
 For more background on \nodekaylespunct, see \cite{sopena_09}, where it is shown to be an excellent venue to explore the differences between Conway's various combinatorial game theory winning conditions and ways of combining games. 

\nofil ends when $A = \emptyset$ and the available hypergraph is empty.  Since an empty hypergraph is also a graph, there is a turn in every game play when the available hypergraph on $A$ is simply a graph, possibly one with no edges or no vertices. In the above example, this first happens at turn $3$ where the available hypergraph is the graph $K_{3}$ with vertices labeled as $4$, $5$, and $9$. From this turn forward the players are effectively playing \nodekayles on $K_3$ and the available hypergraphs for all remaining turns will be graphs.  In the case of $K_3$, once one of the points is chosen, the game terminates. Different lines of play may leave different graphs on which \nodekayles is played.  For this reason, determining the winner of the game on the design, might require analyzing \nodekayles on the possible available hypergraphs and graphs that could appear.  In particular the complexity of determining the winner on a design or the available hypergraph is bounded below by the complexities for determining the winner on the positions which appear, including the positions equivalent to  \nodekaylespunct. We are interested in which graphs can appear as available hypergraphs.  We call these graphs {\em embedded} in the original design.  Understanding these substructures will give insight into the complexity of the original game. 

\subsection{Combinatorial Game Theory}\label{ss: background}
A \emph{combinatorial game} is a two player game of perfect information and no chance, where players move alternately.  A well-studied winning convention is \emph{normal play}, where the player without a move on their turn loses. Cops and Robbers and Tic-Tac-Toe are combinatorial games, but neither uses the normal play convention for winning. Mendelsohn's triple packing game is a combinatorial games with normal play, as is \nofilpunct.  

Often times we are interested in who wins a game and the winner's strategy. The techniques which are used to study a particular combinatorial game depend on the options available to the players. The set of all options that can be encountered in any line of play of a game is called the {\em followers} of the game.  If both players have the same options available from any game position, the game is called \emph{impartial}.  The game of \textsc{Nim}, played on heaps of counters, is the prototypical example of an impartial game  \cite{ANW, Siegel}. In 1902, Bouton~\cite{Bouton_02} proved the complete theory of \textsc{Nim}; Sprague \cite{sprague_36,sprague_37} and Grundy \cite{grundy_39} independently proved that every impartial game is equivalent to a nim-heap. Hence, the techniques used to solve \textsc{Nim} can be used to solve all impartial games. In particular, the Sprague-Grundy theory can be used to determine the game value for impartial games, from which we can determine who can win. The value of an impartial game is calculated recursively via the minimum excluded value. The \emph{minimum excluded value}, or \emph{mex}, of a set $S \subsetneq \mathbb{N}$ is the least non-negative integer not included in $S$. The \textit{Grundy value}, or \emph{nim-value}, of an impartial game $H$ is given by $\mathcal{G}(H)=\mex\{ \mathcal{G}(K)\mid K\mbox{ is an option of } H\}$.  For example, $\mex\{\} = 0$, $\mex\{0,1,3,4,5\}=2$, and $\mex\{2,7,9\}=0$. The game tree of an impartial game $G$ is defined recursively.  It has a root at depth 0, labeled by $G$, which is drawn at the top of any diagram showing the game tree. The children of any node labeled by the game $H$ are the rooted game trees of all the options of $H$.  \cref{simple_tree} shows a simple game tree with each node labeled by its nim-value. Leaves have no options, and so the player whose turn it is will lose if they are presented with those positions. The first player can win the game by playing to the node at level 1 labeled by 0.  If the first player mistakenly moves to the node labeled $1$, they will lose when the second player moves to the node at level 2 labeled by $0$.  

\begin{figure}[ht!]
  \begin{center}
  \begin{tikzpicture}
    [mystyle/.style={circle,draw=black,fill=white,line width=1.00,node font = \tiny \bf , minimum size = 5mm},
    mystyle_scale/.style={circle,draw=black,fill=white,line width=2.00,node font = \tiny \bf , minimum size = 5mm}]
    \tikzmath{\y0 = 0; \y1=-1; \y2=-2;}
    \node (a) at (0,\y0) [mystyle] {\small 2};
    \node (b) at (-0.5,\y1) [mystyle] {\small 0};
    \node (c) at (0.5,\y1) [mystyle] {\small 1};
    \node (j) at (0.5,\y2) [mystyle] {\small 0};

    \path[line width=1.50, black] (a) edge (b) edge (c)
    (c) edge (j);
  \end{tikzpicture}
  \end{center}
  \caption{Game tree of an impartial game with nim-value 2. \label{simple_tree}}
  \end{figure}
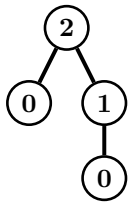

One of the important results in impartial combinatorial game theory is that the winner under optimal play can be determined from the nim-value. 

\begin{theorem}[\cite{ANW}]\label{thm:impartial}
Let $G$ be an impartial game under the normal play winning convention. The game $G$ is won by the second player if and only if the nim-value is zero.
\end{theorem}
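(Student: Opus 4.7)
The plan is to prove this by well-founded (structural) induction on the game tree of $G$, relying on the fact that combinatorial games must terminate so that every game is built up from a finite number of followers. Throughout, I will use the observation that ``the second player wins $G$'' is the same as saying ``the player about to move from $G$ has no winning strategy.''

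The base case is a position $G$ with no options. Under normal play, the player to move loses, so the second player wins; simultaneously $\mathcal{G}(G) = \mex\{\,\} = 0$, giving both directions of the equivalence. For the inductive step, I would assume the result for every follower of $G$ and split into two cases according to the defining formula $\mathcal{G}(G) = \mex\{\mathcal{G}(K) : K \text{ is an option of } G\}$.

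\emph{Case 1: $\mathcal{G}(G) = 0$.} By the definition of $\mex$, the value $0$ is missing from the set of option nim-values, so $\mathcal{G}(K) \neq 0$ for every option $K$. By the inductive hypothesis, each option $K$ is a first-player win. Hence whichever move the first player of $G$ makes, the resulting position is a first-player win for the next mover, who is the original second player; so the second player wins $G$. \emph{Case 2: $\mathcal{G}(G) \neq 0$.} Then $0$ is not excluded by the mex, so there exists an option $K^*$ with $\mathcal{G}(K^*) = 0$. By the inductive hypothesis $K^*$ is a second-player win, and the original first player can realize this by moving to $K^*$, since they then play the role of the ``second'' player in $K^*$. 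So the first player wins $G$. Combining the two cases yields the biconditional at the level of $G$, completing the induction.

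There is no real obstacle; the only subtle point is keeping straight the shift in who counts as ``first'' and ``second'' after a move, which is why I would state explicitly that a second-player win for $K^*$ is a winning strategy for the mover into $K^*$ inside the game $G$. One should also note once at the outset that impartiality is used implicitly: the recursion $\mathcal{G}(H) = \mex\{\mathcal{G}(K)\}$ makes sense because both players face the same set of options from $H$, so ``the mover loses'' is a property of the position rather than of whose turn it is.
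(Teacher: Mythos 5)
Your proof is correct and follows essentially the same route as the paper's: an induction on the game tree whose inductive step splits on whether some option has nim-value $0$ (equivalently, whether $\mathcal{G}(G)=0$), with the mover-to-a-zero-position winning in one case and every move handing the opponent a win in the other. The only cosmetic difference is that you phrase the induction as well-founded/structural while the paper inducts on the height of the tree, which amounts to the same thing for terminating games.
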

\begin{proof}
  The proof is an induction on the height of the game tree.  If a game has no options, it is lost by the first player.  The nim-value of $G$ is the  minimum excluded value of the nim-values of the options of $G$.  If the options are an empty set then $\mathcal{G}(G) = \mex\{\} = 0$.  We assume that the theorem holds for all game trees of heights no more than $h$.  Consider a game tree of height $h+1$.  The heights of the subtrees at each neighbour of the root are no more than $h$ so the winner on each node is determined by its nim-value respectively. If there is a neighbour with nim-value 0, then the induction shows that the first player can win by moving to that position.  Correspondingly, the nim-value at the root must have a positive nim-value because the minimum excluded value is taken over a set containing $0$.  If none of the neighbours of the root have nim-value 0, then induction gives that no matter which play the first player makes, they will lose.  Since no neighbour has nim-value 0, the minimum excluded value over all neighbours will be 0 and this gives that the nim-value of the game at the root is 0.
  \end{proof}
  If a game is won by the second player (also called the \emph{Previous player}), then the game is called a $\mathcal{P}$-position. Otherwise, there exists a winning strategy for the first player (also called a \emph{Next player}) and the game is an $\mathcal{N}$-position. Next we give a detailed example involving these concepts. For more background in combinatorial game theory, see \cite{ANW, Siegel}. 

  \begin{example}\label{p3_pt_ex}
  Let us analyze playing \nodekayles on the disjoint union of a path of length 2 and a disjoint vertex, shown in \cref{p3_pt}.
    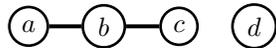
\begin{figure}[ht!]
  \begin{center}
  \begin{tikzpicture}
    [mystyle/.style={circle,draw=black,fill=white,line width=1.00,node font = \tiny \bf , minimum size = 4mm},
    mystyle_scale/.style={circle,draw=black,fill=white,line width=2.00,node font = \tiny \bf , minimum size = 4mm}]
    \node (a) at (0,0) [mystyle] {\small $a$};
    \node (b) at (1,0) [mystyle] {\small $b$};
    \node (c) at (2,0) [mystyle] {\small $c$};
    \node (d) at (3,0) [mystyle] {\small $d$};

    \draw[line width=1.50, black] (a) edge (b);
    \draw[line width=1.50, black] (b) edge (c);

  \end{tikzpicture}
  \end{center}
  \caption{The graph $P_3 \,\dot\cup\, K_1$. \label{p3_pt}}
\end{figure}
Since we need to know the nim-values of all followers, we start by determining the nim-values of some small graphs. \nodekayles played on an empty graph has no moves so nim-value 0.  A single vertex has only one option, the empty graph, and thus has nim-value $\mex\{0\} = 1$. The disjoint union of two vertices again has only one option up to isomorphism, the single vertex which has nim-value 1, thus the nim-value of two disjoint nodes is $\mex\{1\} = 0$.  When playing \nodekayles on just the path $P_3$ (without the additional disjoint vertex), playing on vertex $b$ leaves an empty graph; playing on either $a$ or $c$ leaves a single vertex.  These two options have nim-values 0 and 1 respectively and thus the nim-value of $P_3$ is $\mex\{0,1\} = 2$.
We are now ready to consider playing \nodekayles on $P_3 \,\dot\cup\, K_1$. Playing vertex $a$ or $c$ leaves the disjoint union of two vertices, which has  nim-value 0. Playing vertex $b$ leaves a single vertex, which has  
 nim-value $1$.  Playing vertex $d$ leaves $P_3$, which has nim-value $2$. Thus the nim-value of $P_3 \,\dot\cup\, K_1$ is $\mex\{0,1,2\} = 3$.  The game tree of this game is shown in \cref{p3_pt_tree} with the edges labeled by the vertices played and the nodes labeled with nim-values.
 
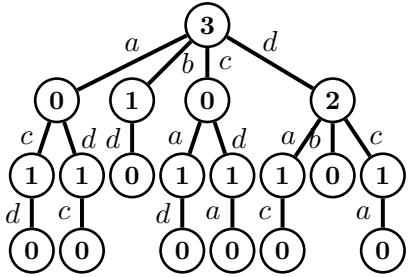
\begin{figure}[ht!]
  \begin{center}
  \begin{tikzpicture}
    [mystyle/.style={circle,draw=black,fill=white,line width=1.00,node font = \tiny \bf , minimum size = 5mm},
    mystyle_scale/.style={circle,draw=black,fill=white,line width=2.00,node font = \tiny \bf , minimum size = 5mm}]
    \tikzmath{\y0 = 0; \y1=-1; \y2=-2; \y3=-3;}
    \node (0) at (0,\y0) [mystyle] {\small 3};
    
    \node (a) at (-2,\y1) [mystyle] {\small 0};
    \node (b) at (-1,\y1) [mystyle] {\small 1};
    \node (c) at (0,\y1) [mystyle] {\small 0};
    \node (d) at (1 + 2/3,\y1) [mystyle] {\small 2};
    
    \node (ac) at (-2-1/3,\y2) [mystyle] {\small 1};
    \node (ad) at (-1-2/3,\y2) [mystyle] {\small 1};
    \node (bd) at (-1,\y2) [mystyle] {\small 0};
    \node (ca) at (-1/3,\y2) [mystyle] {\small 1};
    \node (cd) at (1/3,\y2) [mystyle] {\small 1};
    \node (da) at (1,\y2) [mystyle] {\small 1};
    \node (db) at (1+2/3,\y2) [mystyle] {\small 0};
    \node (dc) at (2+1/3,\y2) [mystyle] {\small 1};
    
    \node (acd) at (-2-1/3,\y3) [mystyle] {\small 0};
    \node (adc) at (-1-2/3,\y3) [mystyle] {\small 0};
    \node (cad) at (-1/3,\y3) [mystyle] {\small 0};
    \node (cda) at (1/3,\y3) [mystyle] {\small 0};
    \node (dac) at (1,\y3) [mystyle] {\small 0};
    \node (dca) at (2+1/3,\y3) [mystyle] {\small 0};

    \draw[line width=1.50, black] (0) -- node[above] {$a$}  (a);
    \draw[line width=1.50, black] (0) -- node[right] {$b$}   (b);
    \draw[line width=1.50, black] (0) -- node[right] {$c$} (c);
    \draw[line width=1.50, black] (0) --node[above] {$d$} (d);
    \draw[line width=1.50, black] (a) --node[left] {$c$}  (ac);
    \draw[line width=1.50, black] (a) --node[right] {$d$} (ad);
    \draw[line width=1.50, black] (ac) --node[left] {$d$} (acd);
    \draw[line width=1.50, black] (ad) --node[left] {$c$}  (adc);
    \draw[line width=1.50, black] (b) --node[left] {$d$}  (bd);
    \draw[line width=1.50, black] (c) --node[left] {$a$}  (ca);
    \draw[line width=1.50, black] (c) --node[right] {$d$} (cd);
    \draw[line width=1.50, black] (ca) --node[left] {$d$} (cad);
    \draw[line width=1.50, black] (cd) --node[left] {$a$}  (cda);
    \draw[line width=1.50, black] (d) --node[left] {$a$}  (da);
    \draw[line width=1.50, black] (d) --node[left] {$b$} (db);
    \draw[line width=1.50, black] (d) --node[right] {$c$} (dc);
    \draw[line width=1.50, black] (da) --node[left] {$c$}  (dac);
    \draw[line width=1.50, black] (dc) --node[left] {$a$} (dca);

  \end{tikzpicture}
  \end{center}
  \caption{Game tree of \nodekayles played on $P_3 \,\dot\cup\, K_1$. \label{p3_pt_tree}}
  \end{figure}
    \end{example}

The paper is presented as follows. In \cref{sec_nim values} we explore playing \nofil on small Steiner triple systems. We compute the nim-values for all $\STS$s of order no more than 15 and report the nim-values for some $\STS(19)$s, $\STS(21)$s, and $\STS(25)$s.  As a special case, we give the entire game tree for the cyclic $\STS(13)$. We also explore nim-values encountered in subpositions as these games are played. Furthermore, we prove a general result for nim-values of designs which are vertex-transitive, \cref{vertextransitive}. 
With the end goal of proving the complexity of \nofil on Steiner triple systems, the rest of the paper examines necessary and sufficient conditions for graph embeddings in Steiner triple systems. In \cref{u_bounds} we derive bounds on the size of the set of unplayable points in terms of the other parameters of the design, vertex partition, and graph. Using this result, in \cref{v_bounds} we derive bounds on $v$. Together, these results give necessary conditions for the graph embedding. Next, in \cref{sufficient}, we derive sufficient conditions for graph embeddings. We prove that any graph can be embedded in a Steiner triple system providing that the order of the design is large enough, \cref{thm_asymp}. We conclude the paper by proving that the complexity of determining the outcome class of \nofil is PSPACE-complete, \cref{complexity}.

\section{Nim-Values of \nofil}\label{sec_nim values}

In this section we investigate the game \nofil played on Steiner triple systems of small orders.  We determine the winner under optimal play and determine the nim-values using SageMath~\cite{sagemath}.  The unique $\STS(7)$ is the projective plane over $\F_2$. The first three points played cannot all be on a block, therefore they must be non-collinear.  The projective linear group acts 3-transitively on non-collinear triples of points \cite{casse_projective_2006}, thus all lines of play are isomorphic up to the third turn of play. The three points on the unique line disjoint from the first three played points are now unplayable and only one point remains available and thus the game ends after the fourth turn and the second player has won.  All second player win games have nim-value 0 by \cref{thm:impartial}.

The unique $\STS(9)$ is the affine plane over $\F_3$. The affine linear group again acts 3-transitively on non-collinear points  \cite{MR1409812}, so just as in the $\STS(7)$ all possible lines of play are isomorphic for the first three moves.  From the game played in \cref{ex: example}, we see that after the third turn the game has become isomorphic to playing \nodekayles on $K_3$ and again all moves are the same up to isomorphism.  Thus the game ends after the fourth turn, the second player has won, and the nim-value is 0.  In playing \nofil on either the $\STS(7)$ or the $\STS(9)$, up to isomorphism, there are no choices for the players to make; every game ends in exactly four moves with the second player winning.

The automorphism groups of these two $\STS$s mean that different lines of play are isomorphic to each other and this simplified the computation of their nim-values.  We can use the transitivity of a design's automorphism group to reduce the possible nim-values of \nofilpunct. If a design has a vertex transitive automorphism group, then all the positions encountered by the second player at the start of the second turn are isomorphic to each other.  If that position, and hence all positions at the start of the second turn, has nim-value $0$, then the nim-value of the initial game is $\mex\{0\} = 1$.  If the nim-value of that position is $x > 0$, then the nim-value of the initial game is $\mex\{x\} = 0$ for any $x >0$.  Thus we have proved the following.
\begin{proposition}\label{vertextransitive}
  If a hypergraph has a vertex-transitive automorphism group, then the nim-value of \nofil played on this hypergraph is either $0$ or $1$.
\end{proposition}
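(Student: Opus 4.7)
The plan is to exploit the key definitional fact that the Sprague--Grundy value is invariant under game isomorphism, together with the combinatorial meaning of vertex-transitivity: any two vertices look identical from the point of view of the hypergraph structure, so any two legal first moves produce isomorphic follower positions.

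First I would handle the degenerate cases. If the hypergraph has no playable vertices at the start (e.g.\ it is empty, or every vertex lies in a hyperedge of size $1$), then the set of options is empty, so $\mathcal{G} = \mex\{\} = 0$, which is within the claimed range. Otherwise, there is at least one legal opening move. Here vertex-transitivity of the automorphism group $\Gamma$ enters: for any two playable vertices $u,v$, there exists $\sigma\in\Gamma$ with $\sigma(u)=v$. Since $\sigma$ is a hypergraph automorphism, it sends the position resulting from playing $u$ to the position resulting from playing $v$, and in particular preserves the played/unplayable/available partition and the available hypergraph up to relabelling.

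Next I would invoke the (easy, inductive) observation that isomorphic impartial game positions have equal nim-value, to conclude that the set of nim-values of the options of the starting position is a singleton $\{x\}$ for some $x\in\mathbb{N}$. Then by definition,
\[
\mathcal{G}(H) \;=\; \mex\{x\} \;=\; \begin{cases} 1, & x = 0,\\ 0, & x \geq 1,\end{cases}
\]
so $\mathcal{G}(H)\in\{0,1\}$ as claimed.

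There is essentially no obstacle here; the only thing to be careful about is that an unplayable vertex $u$ (one lying in a size-$1$ hyperedge, if such exist) may not have any legal move associated to it, but vertex-transitivity forces either every vertex to be playable or none, so the two cases covered above are exhaustive. Thus the proof is a short direct argument from the definition of mex and the definition of a hypergraph automorphism, with no further case analysis required.
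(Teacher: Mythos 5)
Your argument is correct and is essentially the same as the paper's: vertex-transitivity makes all first-move options isomorphic, hence they share a single nim-value $x$, so the starting position has value $\mex\{x\}\in\{0,1\}$. The only addition is your explicit handling of the no-legal-move case (value $0$), which the paper leaves implicit but which is harmless and still within the claimed range.
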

We state this proposition for general hypergraphs so it can be applied after the first turn.  For example, in both the $\STS(7)$ and $\STS(9)$ the automorphism groups are at least 3-transitive on non-collinear points, so the available hypergraphs at the start of the first three moves all have vertex transitive automorphism groups and the proposition applies. The broad utility of this proposition for designs and even Steiner triple systems is limited however because it is known that almost all Steiner triple systems have trivial automorphism groups \cite{MR584402}, so \cref{vertextransitive} is a highly special case.

Using SageMath \cite{sagemath} we have computed that \nofil played on either of the non-isomorphic $\STS(13)$s has nim-value 1.  This means that each is won by the first player if the first player plays optimally.  However, not all lines of play are isomorphic in these $\STS$s.  There is one cyclic $\STS(13)$ with a vertex transitive automorphism group.  It is generated from base blocks $\{0, 3, 4\}$ and $\{0, 5, 7\}$ in $\Z_{13}$.  Using the fact that all first moves are equivalent up to isomorphism allows us to reduce the game-tree enough to display it.  Its structure is shown in \cref{cyclic_sts_13}.  Inside each node is the nim-value of the position rooted at that node. It can be seen that although the first player can win playing optimally, there are opportunities for the first player to lose.  For examples, note that it is the first players turn at the nodes  of even depth.  If the first player moves to any node with non-zero nim-value, they will have offered the second player an opportunity to win.  On the first player's second turn they must move to subtree $C$; the second player can win from any of the other subtrees at depth 3. The other $\STS(13)$ has 4 orbits of vertices and its game tree is too large to display.  The highest nim-values of any of the followers of either $\STS(13)$ is 3.  The positions labeled $B$, $E$, $F$, and $G$ in \cref{cyclic_sts_13} are the positions with nim-value 3.

\begin{figure}[tb!]

  \begin{tikzpicture}
    [mystyle/.style={circle,draw=black,fill=white,line width=1.00,node font = \tiny \bf , minimum size = 5mm},
    mystyle_scale/.style={circle,draw=black,fill=white,line width=2.00,node font = \tiny \bf , minimum size = 5mm}]
    \tikzmath{\y0 = 0; \y1=-1; \y2=-1.8;\y3=-3;\ya2=-2;\ya0=0.5;}
    \node (a) at (0,\y0) [mystyle] {\small 1};
    \node (b) at (0,\y1) [mystyle] {\small 0};
    \node (c) at (-3,\y2) [mystyle] {\small 2};
    \node (j) at (3,\y2) [mystyle] {\small 2};
    \node (d) at (-5.5,\y3) [mystyle] {\small $A$};
    \node (e) at (-4.5,\y3) [mystyle] {\small $B$};
    \node (f) at (-3.5,\y3) [mystyle] {\small $C$};
    \node (g) at (-2.5,\y3) [mystyle] {\small $D$};
    \node (h) at (-1.5,\y3) [mystyle] {\small $E$};
    \node (i) at (-0.5,\y3) [mystyle] {\small $F$};
    \node (ba) at (0.5,\y3) [mystyle] {\small $A$};
    \node (bb) at (1.5,\y3) [mystyle] {\small $B$};
    \node (bc) at (2.5,\y3) [mystyle] {\small $C$};
    \node (bd) at (3.5,\y3) [mystyle] {\small $D$};
    \node (be) at (4.5,\y3) [mystyle] {\small $E$};
    \node (bf) at (5.5,\y3) [mystyle] {\small $G$};

    \path[line width=1.50, black] (c) edge (d) edge (e) edge (f) edge (g) edge (h)
    edge (i)
    (j) edge (ba) edge (bb) edge (bc) edge (bd) edge (be) edge (bf)
    (b) edge (c) edge (j)
    (a) edge (b);

    \node (bounding_box) at (0,-12) [] {};

    \begin{scope} [xshift=-4cm,yshift=-7cm]
      \begin{scope} [transform canvas={scale =0.6}]%scale = 0.6
        \node () at (-2,\ya0) [font = \bf] {\large \textit{A}};
        \node (Aa) at (-1,\ya0) [mystyle_scale] {\large 1};
        
        \node (Ab) at (-5,\y1) [mystyle_scale] {\large 0};
        \node (Ae) at (-3.5,\y1) [mystyle_scale] {\large 2};
        \node (Ai) at (-1.5,\y1) [mystyle_scale] { \large 0};
        \node (Abd) at (2/3,\y1) [mystyle_scale] { \large 2};
        \node (Abj) at (3,\y1) [mystyle_scale] {\large 2};
        
        \node (Ac) at (-5,\ya2) [mystyle_scale] {\large 1};
        \node (Af) at (-4,\ya2) [mystyle_scale] {\large 1};
        \node (Ah) at (-3,\ya2) [mystyle_scale] {\large 0};
        \node (Aj) at (-2,\ya2) [mystyle_scale] {\large 1};
        \node (Abb) at (-1,\ya2) [mystyle_scale] {\large 1};
        \node (Abe) at (0,\ya2) [mystyle_scale] {\large 1};
        \node (Abg) at (2/3,\ya2) [mystyle_scale] {\large 1};
        \node (Abi) at (4/3,\ya2) [mystyle_scale] {\large 0};
        \node (Aca) at (7/3,\ya2) [mystyle_scale] {\large 1};
        \node (Acc) at (3,\ya2) [mystyle_scale] {\large 1};
        \node (Ace) at (11/3,\ya2) [mystyle_scale] {\large 0};
        
        \node (Ad) at (-5,\y3) [mystyle_scale] {\large 0};
        \node (Ag) at (-4,\y3) [mystyle_scale] {\large 0};
        \node (Aba) at (-2,\y3) [mystyle_scale] {\large 0};
        \node (Abc) at (-1,\y3) [mystyle_scale] {\large 0};
        \node (Abf) at (0,\y3) [mystyle_scale] {\large 0};
        \node (Abh) at (2/3,\y3) [mystyle_scale] {\large 0};
        \node (Acb) at (7/3,\y3) [mystyle_scale] {\large 0};
        \node (Acd) at (3,\y3) [mystyle_scale] {\large 0};

        \path[line width=1.50, black] (Ac) edge (Ad)
	(Ab) edge (Ac)
	(Af) edge (Ag)
	(Ae) edge (Af) edge (Ah)
	(Aj) edge (Aba)
	(Abb) edge (Abc)
	(Ai) edge (Aj) edge (Abb)
	(Abe) edge (Abf)
	(Abg) edge (Abh)
	(Abd) edge (Abe) edge (Abg) edge (Abi)
	(Aca) edge (Acb)
	(Acc) edge (Acd)
	(Abj) edge (Aca) edge (Acc) edge (Ace)
	(Aa) edge (Ab) edge (Ae) edge (Ai) edge (Abd) edge (Abj);
      \end{scope}
    \end{scope}

    \begin{scope} [xshift=5cm,yshift=-7cm]
      \begin{scope} [transform canvas={scale =0.6}]
        \node () at (-1,\ya0) [font = \bf] {\large \textit{B}};
        \node at (0,\ya0) [mystyle_scale] (Ba) {\large 3};
        
        \node at (-3,\y1) [mystyle_scale] (Bb) {\large 0};
        \node at (-1.5,\y1) [mystyle_scale] (Bg) {\large 1};
        \node at (0,\y1) [mystyle_scale] (Bi) {\large 2};
        \node at (1.5,\y1) [mystyle_scale] (Bbc) {\large 0};
        \node at (2.5+2/3,\y1) [mystyle_scale] (Bbf) {\large 2};
        
        \node at (-3.5,\ya2) [mystyle_scale] (Bc) {\large 1};
        \node at (-2.5,\ya2) [mystyle_scale] (Be) {\large 1};
        \node at (-1.5,\ya2) [mystyle_scale] (Bh) {\large 0};
        \node at (-0.5,\ya2) [mystyle_scale] (Bj) {\large 1}; 
        \node at (0.5,\ya2) [mystyle_scale] (Bbb) {\large 0};
        \node at (1.5,\ya2) [mystyle_scale] (Bbd) {\large 1};
        \node at (2.5,\ya2) [mystyle_scale] (Bbg) {\large 1};
        \node at (2.5+2/3,\ya2) [mystyle_scale] (Bbi) {\large 1};
        \node at (2.5+4/3,\ya2) [mystyle_scale] (Bca) {\large 0};
        
        \node at (-3.5,\y3) [mystyle_scale] (Bd) {\large 0};
        \node at (-2.5,\y3) [mystyle_scale] (Bf) {\large 0};
        \node at (-0.5,\y3) [mystyle_scale] (Bba) {\large 0};
        \node at (1.5,\y3) [mystyle_scale] (Bbe) {\large 0};
        \node at (2.5,\y3) [mystyle_scale] (Bbh) {\large 0};
        \node at (2.5+2/3,\y3) [mystyle_scale] (Bbj) {\large 0};

        \path[line width =1.5, black] (Bc) edge (Bd)
	(Be) edge (Bf)
	(Bb) edge (Bc) edge (Be)
	(Bg) edge (Bh)
	(Bj) edge (Bba)
	(Bi) edge (Bj) edge (Bbb)
	(Bbd) edge (Bbe)
	(Bbc) edge (Bbd)
	(Bbg) edge (Bbh)
	(Bbi) edge (Bbj)
	(Bbf) edge (Bbg) edge (Bbi) edge (Bca)
	(Ba) edge (Bb) edge (Bg) edge (Bi) edge (Bbc) edge (Bbf);
      \end{scope}
    \end{scope}

    \begin{scope} [xshift=-5cm,yshift=-12cm]
      \begin{scope} [transform canvas={scale =0.6}]
        \node () at (-1,\ya0) [font = \bf] {\large \textit{C}};

        \node at (0,\ya0) [mystyle_scale] (a) {\large 0};

        \node at (-3-1/3,\y1) [mystyle_scale] (b) {\large 1};
        \node at (-1.5-1/3,\y1) [mystyle_scale] (d) {\large 2};
        \node at (1/3,\y1) [mystyle_scale] (h) {\large 2};
        \node at (2+2/3,\y1) [mystyle_scale] (bd) {\large 2};

        \node at (-3-1/3,\ya2) [mystyle_scale] (c) {\large 0};
        \node at (-2-1/3,\ya2) [mystyle_scale] (e) {\large 1};
        \node at (-1-1/3,\ya2) [mystyle_scale] (g) {\large 0};
        \node at (-1/3,\ya2) [mystyle_scale] (i) {\large 1};
        \node at (1/3,\ya2) [mystyle_scale] (ba) {\large 1};
        \node at (1,\ya2) [mystyle_scale] (bc) {\large 0};
        \node at (2,\ya2) [mystyle_scale] (be) {\large 1};
        \node at (2+2/3,\ya2) [mystyle_scale] (bg) {\large 1};
        \node at (3+1/3,\ya2) [mystyle_scale] (bi) {\large 0};

        \node at (-2-1/3,\y3) [mystyle_scale] (f) {\large 0};
        \node at (-1/3,\y3) [mystyle_scale] (j) {\large 0};
        \node at (1/3,\y3) [mystyle_scale] (bb) {\large 0};
        \node at (2,\y3) [mystyle_scale] (bf) {\large 0};
        \node at (2+2/3,\y3) [mystyle_scale] (bh) {\large 0};

        \path[line width=1.5, black] (b) edge (c)
	(e) edge (f)
	(d) edge (e) edge (g)
	(i) edge (j)
	(ba) edge (bb)
	(h) edge (i) edge (ba) edge (bc)
	(be) edge (bf)
	(bg) edge (bh)
	(bd) edge (be) edge (bg) edge (bi)
	(a) edge (b) edge (d) edge (h) edge (bd);

      \end{scope}
    \end{scope}

    \begin{scope} [xshift=+5cm,yshift=-12cm]
      \begin{scope} [transform canvas={scale =0.6}]
        \node () at (-1,\ya0) [font = \bf] {\large \textit{D}};
        
        \node at (0,\ya0) [mystyle_scale] (a) {\large 1};

        \node at (-2.5,\y1) [mystyle_scale] (b) {\large 0};
        \node at (-0.5,\y1) [mystyle_scale] (g) {\large 2};
        \node at (1,\y1) [mystyle_scale] (ba) {\large 0};
        \node at (2+2/3,\y1) [mystyle_scale] (bd) {\large 2};
        
        \node at (-3,\ya2) [mystyle_scale] (c) {\large 1};
        \node at (-2,\ya2) [mystyle_scale] (e) {\large 1};
        \node at (-1,\ya2) [mystyle_scale] (h) {\large 1};
        \node at (0,\ya2) [mystyle_scale] (j) {\large 0};
        \node at (1,\ya2) [mystyle_scale] (bb) {\large 1};
        \node at (2,\ya2) [mystyle_scale] (be) {\large 1};
        \node at (2+2/3,\ya2) [mystyle_scale] (bg) {\large 1};
        \node at (3+1/3,\ya2) [mystyle_scale] (bi) {\large 0};
        
        \node at (-3,\y3) [mystyle_scale] (d) {\large 0};
        \node at (-2,\y3) [mystyle_scale] (f) {\large 0};
        \node at (-1,\y3) [mystyle_scale] (i) {\large 0};
        \node at (1,\y3) [mystyle_scale] (bc) {\large 0};
        \node at (2,\y3) [mystyle_scale] (bf) {\large 0};
        \node at (2+2/3,\y3) [mystyle_scale] (bh) {\large 0};

        \path[line width=1.5, black] (c) edge (d)
	(e) edge (f)
	(b) edge (c) edge (e)
	(h) edge (i)
	(g) edge (h) edge (j)
	(bb) edge (bc)
	(ba) edge (bb)
	(be) edge (bf)
	(bg) edge (bh)
	(bd) edge (be) edge (bg) edge (bi)
	(a) edge (b) edge (g) edge (ba) edge (bd);
      \end{scope}
    \end{scope}

    \begin{scope} [xshift=-5cm,yshift=-17cm]
      \begin{scope} [transform canvas={scale =0.6}]

        \node () at (-1,\ya0) [font = \bf] {\large \textit{E}};
        \node at (0,\ya0) [mystyle_scale] (a) {\large 3};
        
        \node at (-2.5,\y1) [mystyle_scale] (b) {\large 1};
        \node at (-1,\y1) [mystyle_scale] (d) {\large 2};
        \node at (0.5,\y1) [mystyle_scale] (h) {\large 0};
        \node at (1.5+2/3,\y1) [mystyle_scale] (ba) {\large 2};

        \node at (-2.5,\ya2) [mystyle_scale] (c) {\large 0};
        \node at (-1.5,\ya2) [mystyle_scale] (e) {\large 1};
        \node at (-0.5,\ya2) [mystyle_scale] (g) {\large 0};
        \node at (0.5,\ya2) [mystyle_scale] (i) {\large 1};
        \node at (1.5,\ya2) [mystyle_scale] (bb) {\large 1};
        \node at (1.5+2/3,\ya2) [mystyle_scale] (bd) {\large 1};
        \node at (1.5+4/3,\ya2) [mystyle_scale] (bf) {\large 0};

        \node at (-1.5,\y3) [mystyle_scale] (f) {\large 0};
        \node at (0.5,\y3) [mystyle_scale] (j) {\large 0};
        \node at (1.5,\y3) [mystyle_scale] (bc) {\large 0};
        \node at (1.5+2/3,\y3) [mystyle_scale] (be) {\large 0};

        \path[line width = 1.5, black] (b) edge (c)
	(e) edge (f)
	(d) edge (e) edge (g)
	(i) edge (j)
	(h) edge (i)
	(bb) edge (bc)
	(bd) edge (be)
	(ba) edge (bb) edge (bd) edge (bf)
	(a) edge (b) edge (d) edge (h) edge (ba);
        
      \end{scope}
    \end{scope}

    \begin{scope} [xshift=1cm,yshift=-17cm]
      \begin{scope} [transform canvas={scale =0.6}]
        
        \node () at (-1,\ya0) [font = \bf] {\large \textit{F}};
        \node at (0,\ya0) [mystyle_scale] (a) {\large 3};

        \node at (-1.5,\y1) [mystyle_scale] (b) {\large 0};
        \node at (-0.5,\y1) [mystyle_scale] (e) {\large 1};
        \node at (0.5+2/3,\y1) [mystyle_scale] (g) {\large 2};
        
        \node at (-1.5,\ya2) [mystyle_scale] (c) {\large 1};
        \node at (-0.5,\ya2) [mystyle_scale] (f) {\large 0};
        \node at (0.5,\ya2) [mystyle_scale] (h) {\large 1};
        \node at (0.5+2/3,\ya2) [mystyle_scale] (j) {\large 1};
        \node at (0.5+4/3,\ya2) [mystyle_scale] (bb) {\large 0};
        
        \node at (-1.5,\y3) [mystyle_scale] (d) {\large 0};
        \node at (0.5,\y3) [mystyle_scale] (i) {\large 0};
        \node at (0.5+2/3,\y3) [mystyle_scale] (ba) {\large 0};

        \path[line width = 1.5, black] (c) edge (d)
	(b) edge (c)
	(e) edge (f)
	(h) edge (i)
	(j) edge (ba)
	(g) edge (h) edge (j) edge (bb)
	(a) edge (b) edge (e) edge (g);
      \end{scope}
    \end{scope}

    \begin{scope} [xshift=6cm,yshift=-17cm]
      \begin{scope} [transform canvas={scale =0.6}]
       	
        \node () at (-1,\ya0) [font = \bf] {\large \textit{G}};
        \node at (0,\ya0) [mystyle_scale] (a) {\large 3};

        \node at (-1.5,\y1) [mystyle_scale] (b) {\large 1};
        \node at (0,\y1) [mystyle_scale] (d) {\large 2};
        \node at (1.5,\y1) [mystyle_scale] (h) {\large 0};
        
        \node at (-1.5,\ya2) [mystyle_scale] (c) {\large 0};
        \node at (-0.5,\ya2) [mystyle_scale] (e) {\large 1};
        \node at (0.5,\ya2) [mystyle_scale] (g) {\large 0};
        \node at (1.5,\ya2) [mystyle_scale] (i) {\large 1};
        
        \node at (-0.5,\y3) [mystyle_scale] (f) {\large 0};
        \node at (1.5,\y3) [mystyle_scale] (j) {\large 0};

        \path[line width=1.5, black] (b) edge (c)
	(e) edge (f)
	(d) edge (e) edge (g)
	(i) edge (j)
	(h) edge (i)
	(a) edge (b) edge (d) edge (h);
      \end{scope}
    \end{scope}

  \end{tikzpicture}
  \caption{The game-tree of \nofil played on the cyclic $\STS(13)$. \label{cyclic_sts_13}}
\end{figure}
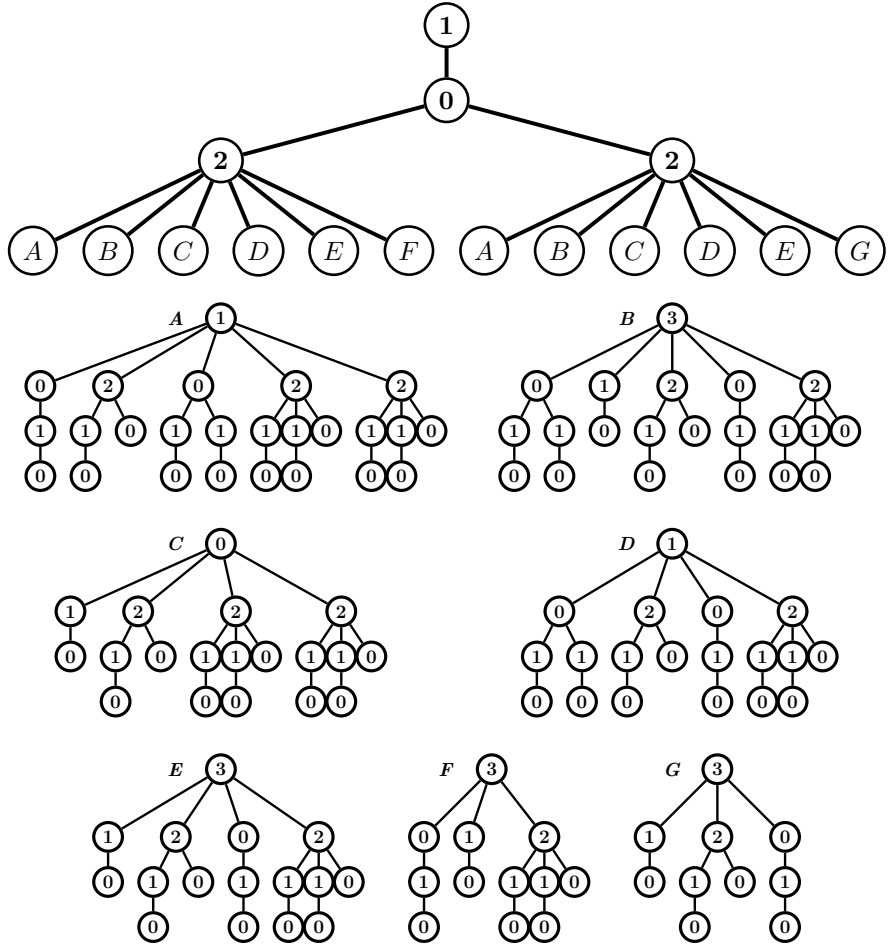

We also computed that of the 80 non-isomorphic $\STS(15)$s, 73 have nim-value 0, one has nim-value 1, five have nim-value 2, and one has nim-value 3.  The four of these that are resolvable all have nim-value 0. Using hill climbing \cite{MR833797} we generated 2000 distinct  $\STS(19)$; all that we generated have nim-value 1.  Similarly we generated 1000 distinct $\STS(21)$; $941$ had nim-value 0 and $59$ had nim-value 2.  We generated 400 distinct $\STS(25)$; 245 had nim-value 1 and 155 had nim-value 3. From these experiments there is an apparent pattern that odd and even nim-values predominate for $v \equiv 1,3 \pmod 6$ respectively.  The average time to compute the nim-value of an $\STS(19)$ with a SageMath program is $102s$, so exhaustively determining the nim-value for all 11084874829 is unfortunately infeasible this way.

\begin{figure}[ht!]
  \begin{center}
  \begin{tikzpicture}
    [mystyle/.style={circle,draw=black,fill=white,line width=1.00,node font = \tiny \bf , minimum size = 6mm},
    mystyle_scale/.style={circle,draw=black,fill=white,line width=2.00,node font = \tiny \bf , minimum size = 6mm}]
    \node (a) at (0:0) [mystyle] {\small 6};
    \node (b) at (0:1) [mystyle] {\small 3};
    \node (c) at (0:2) [mystyle] {\small 4};
    \node (d) at (120:1) [mystyle] {\small 9};
    \node (e) at (240:1) [mystyle] {\small 14};

    \draw[line width=1.50, black] (a) edge (b);
    \draw[line width=1.50, black] (b) edge (c);
    \draw[line width=1.50, black] (a) edge (d);
    \draw[line width=1.50, black] (a) edge (e);

  \end{tikzpicture}
  \end{center}
  \caption{Smallest available hypergraph with nim-value 3 encountered when playing \nofil on $\STS$ up to order 15. \label{small_3}}
  \end{figure}
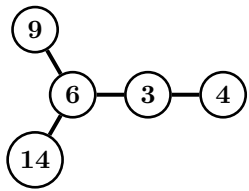

The smallest available hypergraph with nim-value 2 is the 2-path.  This is encountered when playing \nofil on the cyclic $\STS(13)$ and is the middle subtree of position $B$ in \cref{cyclic_sts_13}.  The smallest available hypergraph with nim-value 3 encountered when playing \nofil on Steiner triple systems of orders no more than 15 is shown in \cref{small_3}, with the nodes labeled as in the embedding shown in \cref{grundy_3_sts_15}. Because some points are represented by two digit numbers we list blocks as sets. It does not appear when playing on either of the $\STS(13)$.  It is a subposition in the games played on 39 of the 80 non-isomorphic $\STS(15)$s.  We give one example in \cref{grundy_3_sts_15}. In this example playing vertices $9$ or $14$ gives the disjoint union of an edge and a vertex, which has nim-value 0.  Playing vertex $3$ leaves two disjoint vertices, which has nim-value 0.  Playing vertex $6$ gives a single vertex, which has nim-value 1. Playing vertex $4$ leaves a path, which has nim-value 2.  There is one hypergraph on fewer points with nim-value 3: the disjoint union of a path on three vertices, $P_3$, and $K_1$ discussed in \cref{p3_pt_ex}.  It does not occur as a subposition of \nofil played on any $\STS$ of order 15 or smaller, but it does occur as a subposition played on an $\STS(19)$.  In \cref{sec_restrictions} we will see that some counting arguments show that 19 is the smallest possible order of an $\STS$ which could contain this disjoint union.  The smallest available hypergraph with nim-value 4 found amongst the Steiner triple systems with order no more than 15 is shown in \cref{grundy_4_sts_15}. It only occurs in two of the 80 non-isomorphic $\STS(15)$s. No available hypergraph with nim-value 5 is found when playing \nofil on the $\STS$s with order no more than 15.

\renewcommand{\arraystretch}{1.5}
\begin{table}[ht!]
\begin{center}
  \begin{tabular}{|c|c|}\hline
    $P$ & $\overline{0}$, $\overline{2}$, $\overline{7}$, $\overline{12}$ \\\hline
    $A$ & $\underline{3}$, $\underline{4}$, $\underline{6}$, $\underline{9}$, $\underline{14}$ \\ \hline
    $U$ & 1, 5, 8, 10, 11, 13 \\ \hline
    \multirow{2}{*}{$PPU$} & \{$\overline{0}$, $\overline{2}$, 1\}, \{$\overline{0}$, $\overline{7}$, 8\}, \{$\overline{0}$, $\overline{12}$, 11\} \\
    & \{$\overline{2}$, $\overline{7}$, 13\}, \{$\overline{2}$, $\overline{12}$, 10\}, \{$\overline{7}$, $\overline{12}$, 5\}\\ \hline
  $PAA$ & \{$\overline{0}$, $\underline{3}$, $\underline{4}$\}, \{$\overline{2}$, $\underline{3}$, $\underline{6}$\}, \{$\overline{7}$, $\underline{6}$,  $\underline{14}$\}, \{$\overline{12}$, $\underline{6}$, $\underline{9}$\} \\ \hline
   \multirow{3}{*}{$PAU$ }&
                            \{$\overline{0}$, $\underline{6}$, 5\}, \{$\overline{0}$, $\underline{9}$, 10\}, \{$\overline{0}$,  $\underline{14}$, 13\}, \{$\overline{7}$, $\underline{9}$, 1\}, \\
    & \{$\overline{12}$,  $\underline{14}$, 1\},     \{$\overline{2}$, $\underline{4}$, 5\},    \{$\overline{2}$, $\underline{9}$, 8\},    \{$\overline{2}$,  $\underline{14}$, 11\}, \\
&    \{$\overline{7}$, $\underline{3}$, 10\},    \{$\overline{12}$, $\underline{3}$, 13\},    \{$\overline{7}$, $\underline{4}$, 11\},    \{$\overline{12}$, $\underline{4}$, 8\} \\ \hline
$PUU$ & $\emptyset$ \\ \hline
    \multirow{2}{*}{$AAU$} & \{$\underline{4}$, $\underline{6}$, 1\}, \{$\underline{3}$,  $\underline{14}$, 8\}, \{$\underline{3}$, $\underline{9}$, 11\}, \\
&     \{$\underline{4}$, $\underline{9}$, 13\},  \{$\underline{4}$,  $\underline{14}$, 10\}, \{$\underline{9}$,  $\underline{14}$, 5\} \\ \hline
 $AUU$ & \{$\underline{3}$, 1, 5\}, \{$\underline{6}$, 8, 13\}, \{$\underline{6}$, 10, 11\} \\ \hline
$UUU$ & \{1, 8, 11\}, \{1, 10, 13\}, \{5, 8, 10\}, \{5, 11, 13\} \\ \hline
\end{tabular}
  \caption{$\STS(15)$ with an available hypergraph with nim-value 3 embedded. \label{grundy_3_sts_15}}
\end{center}
\end{table}

\renewcommand{\arraystretch}{1}

\renewcommand{\arraystretch}{1.5}
\begin{table}[ht!]
\begin{center}
  \begin{tabular}{|c|c|}\hline
$P$ &  $\overline{0}$, $\overline{7}$, $\overline{11}$ \\ \hline
$A$ & $\underline{1}$, $\underline{2}$, $\underline{3}$, $\underline{5}$, $\underline{6}$, $\underline{9}$, $\underline{10}$, $\underline{13}$, $\underline{14}$ \\ \hline
   $U$ &  4, 8, 12 \\ \hline
  $PPU$ & \{$\overline{0}$, $\overline{7}$, 8\}, \{$\overline{0}$, $\overline{11}$, 12\}, \{$\overline{7}$, $\overline{11}$, 4\} \\ \hline
   \multirow{3}{*}{$PAA$} &  \{$\overline{0}$, $\underline{1}$, $\underline{2}$\},     \{$\overline{0}$, $\underline{5}$, $\underline{6}$\},     \{$\overline{0}$,  $\underline{9}$, $\underline{10}$\},     \{$\overline{0}$, $\underline{13}$, $\underline{14}$\} \\ 
    & \{$\overline{7}$, $\underline{1}$,  $\underline{9}$\},     \{$\overline{7}$, $\underline{2}$, $\underline{13}$\},     \{$\overline{11}$, $\underline{2}$, $\underline{14}$\},     \{$\overline{7}$, $\underline{3}$, $\underline{10}$\} \\ 
 &     \{$\overline{11}$, $\underline{3}$,  $\underline{9}$\},     \{$\overline{11}$, $\underline{5}$, $\underline{13}$\},     \{$\overline{7}$, $\underline{6}$, $\underline{14}$\},     \{$\overline{11}$, $\underline{6}$, $\underline{10}$\}\\ \hline
   $PAU$ & \{$\overline{0}$, $\underline{3}$, 4\}, \{$\overline{11}$, $\underline{1}$, 8\}, \{$\overline{7}$, $\underline{5}$, 12\} \\ \hline
   $PUU$ & $\emptyset$ \\ \hline
   $AAA$ & \{$\underline{1}$, $\underline{3}$, $\underline{5}$\}, \{$\underline{1}$, $\underline{10}$, $\underline{13}$\}, \{$\underline{2}$, $\underline{3}$, $\underline{6}$\}, \{$\underline{5}$,  $\underline{9}$, $\underline{14}$\} \\ \hline
     \multirow{3}{*}{$AAU$} &     \{$\underline{1}$, $\underline{6}$, 4\},     \{$\underline{1}$, $\underline{14}$, 12\},     \{$\underline{2}$, $\underline{5}$, 4\},     \{$\underline{2}$,  $\underline{9}$, 8\} \\ 
&     \{$\underline{2}$, $\underline{10}$, 12\},     \{$\underline{3}$, $\underline{14}$, 8\},     \{$\underline{3}$, $\underline{13}$, 12\},     \{$\underline{9}$, $\underline{13}$, 4\} \\ \
&     \{$\underline{10}$, $\underline{14}$, 4\},     \{$\underline{5}$, $\underline{10}$, 8\},     \{$\underline{6}$, $\underline{13}$, 8\},     \{$\underline{6}$,  $\underline{9}$, 12\} \\ \hline
   $AUU$ & $\emptyset$ \\ \hline
   $UUU$ & \{4, 8, 12\} \\ \hline
\end{tabular}
  \caption{$\STS(15)$ with an available hypergraph with nim-value 4 embedded. \label{grundy_4_sts_15}}
\end{center}
\end{table}
\renewcommand{\arraystretch}{1}

\section{Necessary conditions for embedding a graph in an $\STS$}\label{sec_restrictions}

Some of the interesting examples of nim-values amongst the subpositions of the games on small Steiner triple systems corresponded to available hypergraphs that were graphs.  In addition, \textsc{Node Kayles} is well understood on graphs and offers an avenue to begin to understand aspects of \nofilpunct.  In this section we explore necessary conditions for a graph to appear as an available hypergraph when playing \nofil on an $\STS$.

For a given graph $G$, we are interested in knowing if it can be embedded in an $\STS$ and what restrictions such an $\STS$ has, such as the possible orders of the triple system.  The vertex set of the $\STS$ will be partitioned into $P$, $A$, and $U$ where $A = V(G)$. Although in the discussion of the available hypergraph where the sets $P$, $A$, and $U$ are defined for every turn of game-play, in the remainder of the paper they will be the specific sets for the embedding of the particular graph under consideration.

We classify each block of the $\STS$ according to which of $P$, $A$, and $U$ its three points belong.  Recall that every pair of points is on a unique block of the Steiner triple system.  The rules of the game forbid a block with three played points, \ie  $B \not\subseteq P$ for any block $B$.  If play has led to the turn where the game is equivalent to \nodekayles on a graph, then no block contains three available points, \ie $B \not\subseteq A$ for any block $B$.  Since points in $U$ are determined precisely by being the third point on a block with two played points, there will be no blocks with two played points and one available point. The remaining possible combinations of point types on a block are either one point from $P$ and two from $A$ or any combination  which contains at least one point of $U$. Let
\begin{align*}
  PPU &= \{ B: |B\cap P|=2, |B\cap U| = 1\}, \\
  PAA &= \{ B: |B\cap P|=1, |B\cap A| = 2\}, 
\end{align*}
and define $PAU$, $PUU$, $AAU$, $AUU$, and $UUU$ analogously. 

Since every pair of points from $P$ must be on a block of $PPU$ we have that the number of this type of block is
\[
  |PPU| = \binom{|P|}{2}.
  \]
Consider an edge $\{a_1, a_2\}$ in $G$ and let $\{a_1,a_2,x\}$ be the unique block containing $a_1$ and $a_2$.  If $x \in U$ then no restrictions on playing $a_1$ nor $a_2$ ever are derived from this block because it can never be completely filled in and thus it is not an edge in the available hypergraph.  Thus $x \in P$ and every edge of $G$ is on a block in $PAA$.  If $a_1$ and $a_2$ are not the endpoints of an edge in $G$ and $x \in P$ then the rules forbid playing both $a_1$ and $a_2$ which contradicts $a_1$ and $a_2$ being adjacent.  Thus no non-edge of $G$ is on a block of type $PAA$ so
\[
   |PAA| = |E(G)|.
 \]
 These account for $2|PAA|$ pairs of $P \times A$ and the rest of these pairs must be on blocks of $PAU$, thus
 \begin{align*}
     |PAU| &= |P||A| -2|PAA| \\
        &=  |P||A| -2|E(G)|.
\end{align*}
Pairs of vertices from $P \times U$ can only be on blocks of $PPU$, $PAU$, or $PUU$, so
\begin{align*}
 |PUU| &= \frac{|P||U|-|PAU|-2|PPU|}{2} \\
        &= \frac{|P|(|U|-|P|-|A|+1)}{2} +|E(G)|.
\end{align*}
Every non-edge in $G$ is on a block of $AAU$ and all other pairs of points from $A$ are on blocks of $PAA$.  Thus
\begin{align*}
 |AAU| &= |E(\overline{G})|\\
        &= \frac{|A|(|A|-1)}{2} - |E(G)|.
\end{align*}
Pairs of points from $A \times U$ are on blocks of $PAU$, $AAU$, or $AUU$. Thus we have
\begin{align*}
|AUU| &= \frac{|U||A|-|PAU|-2|AAU|}{2}\\
        &= \frac{|A|(|U|-|P|-|A|+1)}{2} + 2|E(G)|.
\end{align*}
All the unaccounted pairs of vertices in $U$ must be on blocks of $UUU$.  So we get that
\begin{align*}
|UUU| &= \frac{{|U| \choose 2} - |AUU|-|PUU|}{3} \\
        &= \frac{|U|^2 - |U| -(|A|+|P|)(|U|-|P|-|A|+1)}{6} - |E(G)|.
\end{align*}
To make these expressions more tidy and ease calculations done with them we define $p = |P|$, $a = |A| = |V(G)|$, $u=|U|$, and $e = |E(G)|$. In summary, we have
\begin{align*}
|PPU| &= \binom{p}{2}\\
  |PAA| &= e\\
  |PAU| &=  pa-2e \\
  |PUU| &= \frac{p(u-p-a+1)}{2} +e\\
  |AAU| &= \binom{a}{2} - e \\
  |AUU| &= \frac{a(u-p-a+1)}{2} + 2e \\
  |UUU| &= \frac{u^2 - u -(a+p)(u-p-a+1)}{6} - e.
\end{align*}

Note that the graph with zero vertices can always be embedded into any design and corresponds exactly to the available graph at the end of any line of game play. From now on we assume that the graph $G$ on the available points inherited from the $\STS$ is non-empty, so that $|V(G)|\geq 1$.

We now derive bounds on the size of $U$ in terms of the other parameters.

\begin{lemma}\label{u_bounds}
Let $(X,\mathcal{B})$ be an $\STS(v)$ with points $P$ played.  Suppose the available hypergraph when playing \nofil is the graph $G$.  Then
\begin{align}
u &\leq v-a-\chi'(G),    \label{chi_g} \\
  u &\geq \chi'(\overline{G}), \label{chi_comp_g} \\
  u &\geq \frac{v-a-1}{2}, \label{chi_K_p} \\
u &\geq \frac{a(v-1) -4e}{2a},  \label{auu} \\
u &\geq \frac{3v-2a-1 - \sqrt{(v-2a+1)^2 + 16e}}{4}, \label{puu_1} \\
u &\leq \frac{3v-2a-1 + \sqrt{(v-2a+1)^2 + 16e}}{4},    \label{puu_2} \\
  u &\leq v-a - \frac{2e}{a},  \label{pau} \\
  u &\leq v-a + \frac{1-\sqrt{8(v-a)+1}}{2},  \label{u_binom_p_2} 
\end{align}
and if $v^2-4v \leq 24e$, then either
\begin{equation}\label{uuu_1}
u \geq \frac{v}{2} + \frac{\sqrt{72e-3v^2+12v}}{6}
\end{equation}
or
\begin{equation}\label{uuu_2}
u \leq \frac{v}{2} - \frac{\sqrt{72e-3v^2+12v}}{6}.
\end{equation}

\end{lemma}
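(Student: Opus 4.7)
The plan is to derive each of the ten inequalities from one of two ingredients: the non-negativity of the block-count formulas $|PPU|,|PAA|,\ldots,|UUU|$ established immediately before the lemma, or edge-coloring arguments that exploit the defining property that every pair of points of an $\STS$ lies on a unique block. Throughout I would substitute $p=v-a-u$ to convert inequalities in $p$ into inequalities in $u$.

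For \eqref{chi_g}, I assign to each edge $\{a_1,a_2\}\in E(G)$ the unique played point $x$ for which $\{a_1,a_2,x\}\in PAA$; such an $x$ exists because, as shown in the derivation of $|PAA|=e$, every edge of $G$ lies on exactly one $PAA$ block. If two incident edges $\{a_1,a_2\}$ and $\{a_2,a_3\}$ were assigned the same color $x$, then the pair $\{x,a_2\}$ would lie on two distinct blocks, contradicting the Steiner property; hence the assignment is a proper edge coloring and $p\geq\chi'(G)$, giving \eqref{chi_g}. The same idea, coloring the edges of $\overline{G}$ via the unplayable point of each $AAU$ block, yields \eqref{chi_comp_g}; and coloring the edges of the complete graph on $P$ via the unplayable point of each $PPU$ block yields a proper edge coloring of $K_p$, so $u\geq\chi'(K_p)\geq p-1$, which rearranges to \eqref{chi_K_p}. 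For \eqref{u_binom_p_2}, every unplayable point is by its very definition the third vertex of some $PPU$ block, so the map $PPU\to U$ sending a block to its unplayable point is surjective, giving $u\leq|PPU|=\binom{p}{2}$; solving this quadratic inequality in $p$ for $u$ produces \eqref{u_binom_p_2}.

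The remaining five inequalities follow from non-negativity of the corresponding block counts. The linear inequalities $|PAU|\geq 0$ and $|AUU|\geq 0$, once $p$ is eliminated, rearrange directly to \eqref{pau} and \eqref{auu}. The inequality $|PUU|\geq 0$ becomes, after substitution, a quadratic in $u$ with negative leading coefficient, so $u$ must lie between its two roots; a direct computation shows the discriminant simplifies to $(v-2a+1)^2+16e$, producing \eqref{puu_1}--\eqref{puu_2}. Analogously, $|UUU|\geq 0$ becomes a quadratic in $u$ with positive leading coefficient and discriminant $3(24e+4v-v^2)$: when this discriminant is negative, equivalently $v^2-4v>24e$, the quadratic is automatically nonnegative and no restriction on $u$ arises; when the discriminant is nonnegative, equivalently $v^2-4v\leq 24e$ (the hypothesis in the lemma), the constraint forces $u$ to lie outside the two roots, yielding \eqref{uuu_1}--\eqref{uuu_2}.

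The conceptual content is modest: three edge-coloring arguments and one surjection $PPU\twoheadrightarrow U$. The main obstacle is purely algebraic bookkeeping — verifying that the two discriminants collapse cleanly to $(v-2a+1)^2+16e$ and $3(24e+4v-v^2)$ respectively requires careful but routine expansion, and these are the only steps in the proof where any calculation of substance is required.
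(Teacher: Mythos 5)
Your proposal follows essentially the same route as the paper: the three colouring bounds \eqref{chi_g}, \eqref{chi_comp_g}, \eqref{chi_K_p} come from reading the $PAA$, $AAU$, and $PPU$ blocks as proper edge colourings of $G$, $\overline{G}$, and $K_p$ (your observation that a repeated colour on incident edges would put a pair on two blocks is exactly the edge-disjointness argument the paper uses), the linear bounds \eqref{pau} and \eqref{auu} come from $|PAU|\geq 0$ and $|AUU|\geq 0$, the quadratics from $|PUU|\geq 0$ and $|UUU|\geq 0$ give \eqref{puu_1}--\eqref{puu_2} and \eqref{uuu_1}--\eqref{uuu_2} with the discriminants you state, and \eqref{u_binom_p_2} comes from the surjection $PPU\to U$ giving $u\leq\binom{p}{2}$. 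The one place you are too quick is the last of these: after substituting $p=v-a-u$, the inequality $u\leq\binom{p}{2}$ becomes a quadratic in $u$ with \emph{positive} leading coefficient that must be $\geq 0$, so a priori $u$ lies either below the smaller root or above the larger root, and you must explicitly discard the branch $u \geq v-a+\bigl(1+\sqrt{8(v-a)+1}\bigr)/2$; this is immediate since that branch forces $u>v-a$, i.e.\ $p<0$, but the paper does record this step and your write-up should too. With that one line added, the argument is complete and identical in substance to the paper's.
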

\begin{proof}
 Let $PAA = \{\{x,y,\phi(x,y)\}:\{x,y\} \in E(G)\}$. Then the fact that these triples must be edge disjoint implies that $\phi:E(G) \rightarrow P$ is a proper edge colouring of $G$.  Thus $\chi'(G) \leq p$ and \cref{chi_g} follows when we substitute $p = v-a-u$.  Similarly $AAU= \{\{x,y,f(x,y)\}: \{x,y\} \not\in E(G)\}$ gives that $f$ is a proper edge colouring of $\overline{G}$ and results in \cref{chi_comp_g}. Using $PPU = \{\{x,y,g(x,y)\}:x,y \in P\}$ yields an edge colouring of $K_p$. Then $p=v-u-a$ and the fact that any proper edge colouring of $K_p$ has at least $p-1$ colours gives \cref{chi_K_p}. 
  
  Since each of $|PAU|$, $|AUU|$, $|PUU|$, and $|UUU|$ are cardinalities of block sets, they must each be non-negative.  \cref{auu} derives from $|AUU| \geq 0$ and \cref{pau} derives from $|PAU| \geq 0$.

  For \cref{puu_1,puu_2}, we start with $|PUU|\geq 0$, which gives
  \[
    2u^2 + (2a-3v+1)u + (v-a)(v-1) -2e \leq 0.
    \]
Thus $u$ must be between the two roots, which gives the two inequalities.
  
  For \cref{u_binom_p_2} we first note that since every point in $U$ must be on at least one block from $PPU$ and $|PPU| = \binom{p}{2}$, we have $u \leq \binom{p}{2}$.  Using $p = v-a-u$ this implies
  \[
    u^2 + (-2v+2a-1)u + (v-a)(v-a-1) \geq 0.
  \]
The discriminant $8(v-a)+1$ is positive because $v \geq a$.  This means that either   
\begin{align*}
    u &\geq v-a+ \frac{1 + \sqrt{8(v-a)+1}}{2}\text{ or }\\
    u &\leq v-a + \frac{1 - \sqrt{8(v-a)+1}}{2}
  \end{align*}
must hold. The first of these implies that $p + (1+\sqrt{(8(p+u)+1})/2 \leq 0$, which is impossible, so the second must hold.

  Finally, $|UUU| \geq 0$ yields
  \[
    3u^2-3vu + v^2-v-6e \geq 0.
    \]
    If $v^2-4v-24e > 0$ (equivalently $v \geq 2 + \sqrt{24e+4}$), then the discriminant is negative and the inequality is always satisfied.  However, when $v^2-4v \leq 24e$, then $u$ must not lie between the two roots and we have that either
 \begin{align*}
u &\geq \frac{v}{2} + \frac{\sqrt{72e-3v^2+12v}}{6}\text{ or } \\
u &\leq \frac{v}{2} - \frac{\sqrt{72e-3v^2+12v}}{6}
 \end{align*}
must hold.
  \end{proof}

These bounds on $u$ can gives us bounds on $v$.
\begin{theorem} \label{v_bounds}
  The lower bounds in \cref{chi_K_p,auu,puu_1} and $u \geq 0$ are not greater than the upper bounds in \cref{puu_2,pau,u_binom_p_2} exactly when
  \begin{equation}\label{eqn_v_bounds}
    v \geq \begin{cases}
      a -1 + \frac{4e}{a} & a \geq 4 \mbox{ and } e \geq \frac{a^2}{4}, \\
      a -1 + \frac{4e}{a} & a \leq 4 \mbox{ and } e \geq a, \\
      2a-1 & a \geq 4 \mbox { and } \frac{-a+\sqrt{8a^3-7a^2}}{4} \leq e \leq \frac{a^2}{4}, \\
      2a + 2 - \frac{4e}{a} + \sqrt{8a+1 -\frac{32e}{a}} & a \geq 4 \mbox{ and } e \leq  \frac{-a+\sqrt{8a^3-7a^2}}{4}, \\
      a+3 & a \leq 4 \mbox{ and } \frac{a^2}{4} \leq e \leq a, \\
      
      2a + 2 - \frac{4e}{a} + \sqrt{8a+1 -\frac{32e}{a}} & a \leq 4 \mbox{ and } e \leq  \frac{a^2}{4}, \\
    \end{cases}
    \end{equation}
  with 60 exceptional triples $(a,e,v)$ which cannot occur, given in \cref{5_8_not_4_8_all_a}.
\end{theorem}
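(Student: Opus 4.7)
The plan is to compare each of the four lower bounds on $u$ from \cref{u_bounds} (namely \eqref{chi_K_p}, \eqref{auu}, \eqref{puu_1}, and $u\geq 0$) against each of the three upper bounds (namely \eqref{puu_2}, \eqref{pau}, \eqref{u_binom_p_2}), yielding twelve pairwise inequalities of the form $L \leq U$. Each is a polynomial or quadratic-in-$v$ inequality whose coefficients depend on $a$ and $e$, so each can be solved to give a condition of the form $v \geq \varphi(a,e)$ for some algebraic function $\varphi$. The set of $(a,e,v)$ satisfying all twelve is then precisely the set we must characterize, and the stated bound in \eqref{eqn_v_bounds} is the pointwise maximum of the twelve $\varphi$'s.

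First I would carry out the twelve pairwise comparisons. Several are immediate: for example, $\eqref{puu_1} \leq \eqref{puu_2}$ is the discriminant condition $(v-2a+1)^2 + 16e \geq 0$, which holds automatically; $\eqref{auu} \leq \eqref{pau}$ rearranges cleanly to $v \geq a - 1 + 4e/a$; $\eqref{chi_K_p} \leq \eqref{pau}$ reduces to $v \geq 2a - 1 + (\text{correction in }e/a)$; and $0 \leq \eqref{u_binom_p_2}$ is a quadratic in $v$ whose solution involves $\sqrt{8(v-a)+1}$ and produces the branch $v \geq 2a+2 - 4e/a + \sqrt{8a+1-32e/a}$. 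The remaining comparisons all reduce, after clearing denominators and moving square roots to one side, to one of a handful of standard forms. I would tabulate the resulting twelve inequalities so that their maxima in $v$ can be compared directly.

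Next I would determine which inequality is binding in each region of the $(a,e)$-plane. The crossover curves where one bound overtakes another are exactly the level sets $e = a^2/4$, $e=a$, and $e = (-a+\sqrt{8a^3-7a^2})/4$ that appear in \eqref{eqn_v_bounds}, together with $a=4$, which emerges as the transition between the quadratic and linear bounds. On each region I would verify by direct substitution that the claimed formula in \eqref{eqn_v_bounds} coincides with the max of the twelve $\varphi(a,e)$. The main obstacle here is bookkeeping: keeping track of which pair is tight on each side of each curve, and confirming that no sliver of $(a,e)$-space is misclassified. This is a finite but substantial case analysis, cleanest to organize as a table indexed by the six listed cases.

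Finally, the sixty exceptional triples in \cref{5_8_not_4_8_all_a} arise because \eqref{eqn_v_bounds} reflects only the continuous compatibility of the twelve bounds, whereas a genuine embedding additionally requires that $v\equiv 1,3\pmod 6$, that $u$ be an integer strictly satisfying all the bounds (including the divisibility implicit in each block-count formula from \cref{sec_restrictions}), and that the conditional constraint \eqref{uuu_1}--\eqref{uuu_2} be respected when $v^2 - 4v \leq 24e$. I would isolate the exceptions by enumerating all $(a,e,v)$ lying on or just above the boundary curves in \eqref{eqn_v_bounds} where one of these arithmetic side conditions fails, and then check this finite list against \cref{5_8_not_4_8_all_a}. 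I expect the most delicate step to be verifying the completeness of this exception list rather than any individual algebraic manipulation, since it requires ruling out the possibility of additional sporadic obstructions beyond the six parametric cases.
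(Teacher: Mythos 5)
Your overall plan---compare each lower bound on $u$ against each upper bound pairwise and take the maximum of the resulting conditions on $v$---matches the paper's strategy, but it has a genuine gap at exactly the point where the theorem is hard. You assume that each of the twelve comparisons ``can be solved to give a condition of the form $v \geq \varphi(a,e)$'' and that \cref{eqn_v_bounds} is the pointwise maximum of these. That is true for eleven of the pairs, but it fails for the pair \cref{puu_1} versus \cref{u_binom_p_2}: both sides involve square roots in $v$ in incompatible ways, and the comparison does not reduce to a clean algebraic threshold. The entire substance of the paper's proof is showing that this one recalcitrant pair is dominated by the bound from \cref{auu} and \cref{u_binom_p_2} outside a finite region---via monotonicity in $e$ (the lower bound \cref{puu_1} decreases in $e$ while \cref{u_binom_p_2} is independent of $e$) and a comparison of partial derivatives in $v$ to show the upper bound grows faster---and then exhaustively checking the finite residual region. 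Your proposal does not contain this reduction-to-finiteness argument, and without it the case analysis over the six regions of the $(a,e)$-plane cannot be completed.

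Relatedly, your explanation of the $60$ exceptional triples is wrong. They do not arise from the congruence $v\equiv 1,3\pmod 6$, from integrality of $u$, or from the conditional constraints \cref{uuu_1,uuu_2} (which are not among the bounds the theorem addresses). The paper explicitly defers all of those refinements until after the theorem, where they produce a \emph{different} exception table (\cref{reduced_list_of_exceptions}). The $60$ triples of \cref{5_8_not_4_8_all_a} are precisely the real-valued triples $(a,e,v)$ forbidden by the pair \cref{puu_1} and \cref{u_binom_p_2} but permitted by every other pair; they are found by direct computation over the finite ranges left unresolved by the asymptotic arguments. An enumeration driven by arithmetic side conditions, as you propose, would produce the wrong list. (A minor further note: some of your attributions are shuffled---$\text{\cref{auu}}\leq\text{\cref{pau}}$ gives $v\geq 2a-1$, not $v\geq a-1+4e/a$, and the bound $v\geq 2a+2-4e/a+\sqrt{8a+1-32e/a}$ comes from $\text{\cref{auu}}\leq\text{\cref{u_binom_p_2}}$ rather than from $0\leq\text{\cref{u_binom_p_2}}$---but these are bookkeeping slips rather than structural errors.)
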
  
\begin{proof}

  We consider all the pairs of lower bounds and upper bounds on $u$ from \cref{u_bounds} and additionally $u \geq 0$.  For each pair we determine the restrictions on $v$ implied by assuming the lower bound is no larger than the upper bound. We do not consider either the fact that $v$ must be an integer, nor that $v \equiv 1,3 \pmod 6$ (which is necessary when $v$ is the order of an $\STS$). The order in which we consider the pairs of bounds is chosen to give information early that is useful when considering later pairs.  We will find that, except for a finite number of exceptional values of $(a,e,v)$, there are only four restrictions on $v$ that are not always exceeded by other restrictions:
  \begin{align*}
    v &\geq a-1 + \frac{4e}{a} \\
    v &\geq 2a-1 \\
    v &\geq a+3 \\
    v &\geq 2a+2 - \frac{4e}{a} + \sqrt{8a+1-\frac{32e}{a}}.
  \end{align*}
Consideration of the different regions of $\{(a,e) \in (\mathbb{R^{+}})^2\}$ where each of these is the most restrictive (largest) yields the bounds on $v$ given in \cref{eqn_v_bounds}.

\cref{chi_K_p} is always strictly less than \cref{puu_2}, and \cref{puu_1} is always less than or equal to \cref{puu_2}. So neither of these pairs give constraints on $v$.

The pairs \cref{chi_K_p} and \cref{pau}, and \cref{puu_1} and \cref{pau} both give
\[
  v \geq a-1 + \frac{4e}{a}.
\]

The constraint on $v$ derived from \cref{auu} being no greater than \cref{pau} implies 
\begin{equation}\label{4_and_7}
v \geq 2a-1.
\end{equation}
With $v \geq 2a-1$, \cref{auu} is less than \cref{puu_2} and no constraint on $v$ results. 
From $u \geq 0$ and \cref{pau} we get that
\[
  v \geq a + \frac{2e}{a}.
\]
But since $e \leq a(a-1)/2$, no constraint on $v$ is obtained when $v \geq 2a-1$.

Setting \cref{auu} less than or equal to \cref{u_binom_p_2} and isolating the square root term gives
\[
  \sqrt{8(v-a)+1} \leq v-2a+2 + \frac{4e}{a},
\]
where both sides are positive since $v \geq 2a-1$ by \cref{4_and_7}.  Since both sides are positive, an equivalent inequality is obtained by squaring both sides, giving 
\[
  \left(v-2a-2+ \frac{4e}{a}\right)^2 \geq 8a + 1 - \frac{32e}{a},
\]
which then implies
\begin{equation} \label{4_and_8}
  v \geq 2a + 2 - \frac{4e}{a} + \sqrt{8a + 1 - \frac{32 e}{a}}.
\end{equation}

From \cref{chi_K_p,u_binom_p_2} we have that
\[
  \frac{v-a-1}{2} \leq u \leq  v-a+\frac{1-\sqrt{8(v-a)+1}}{2},
\]
which gives $v \geq a+3$.

From $u \geq 0$ and \cref{u_binom_p_2}, we get that $v \geq a+1$, which gives no additional constraint since $v \geq a+3$. When $v \geq a+3$ the lower bound $u \geq 0$ is never larger than the upper bound from \cref{puu_2}. 

We have derived the constraints on $v$ from all pairs of lower and upper bounds on $u$, except for \cref{puu_1,u_binom_p_2}. \cref{tab: summary table} gives a summary of the constraints on $v$ presented in lexicographic order by equation numbers.

We now show that when $v \geq 2a-1$, then the constraint on $v$ derived from \cref{puu_1,u_binom_p_2} is less tight than \cref{4_and_8}, except for a finite number of values.

When $v=2a-1$, then \cref{puu_1} is smaller than \cref{u_binom_p_2} exactly when 
\[
  e \geq \frac{8a-6-2\sqrt{8a-7}}{4}.
\]
We use this value of $e$ to break the problem into cases.  As we proceed we will encounter finite sets of $(a,e,v)$ which are not covered by our computations.  We will collect these as we go and deal with them separately in Case 3.\\

\noindent\underline{\bf Case 1: $e < (8a-6-2\sqrt{8a-7})/4$}\\
\noindent
When $e = 0$, we get $v \geq 2a+2+\sqrt{8a+1}$. 

\cref{u_binom_p_2} does not depend on $e$ and  \cref{puu_1} decreases  as $e$ increases.  Thus if \cref{puu_1,u_binom_p_2} admit a non-empty range for $u$ (\ie $v$ is not forbidden by them) for a given $a$, $e$, and $v$, then they admit a positive range for all higher $e$.  Thus \cref{puu_1,u_binom_p_2} can only restrict $v < 2a+2+\sqrt{8a+1}$ when $e < (8a-6-2\sqrt{8a-7})/{4}$.\\
If in addition $a < 26$, then there are only a finite number of triples $(a,e,v)$.  We leave this finite set to Case 3. So assume that $a \geq 26$ and we obtain the following sequence of inequalities:

\begin{align*}
  3a-9 &\geq 2a + 2 + \sqrt{8a+1}\\
  3a-9 &\geq v \\
  3a-v-1 &\geq 8 \\
  \frac{e(3a-v-1)}{a} &\geq \frac{4e^2}{a^2} \\
  e &\geq \frac{e(v-2a+1)}{a} + \frac{4e^2}{a^2}\\
  \frac{(v-2a+1)^2 + 16e}{16} &\geq \frac{\left ( (v-2a+1) + \frac{8e}{a} \right)^2}{16} \\
  \frac{\sqrt{(v-2a+1)^2 + 16e}}{4} &\geq  \frac{(v-2a+1) + \frac{8e}{a}}{4} \\
 \frac{ -(v-2a+1) - \frac{8e}{a}}{4} &\geq \frac{-\sqrt{(v-2a+1)^2 + 16e}}{4} \\
  \frac{2v  -2 - \frac{8e}{a}}{4} &\geq \frac{3v - 2a - 1 -\sqrt{(v-2a+1)^2 + 16e}}{4}\\
  \frac{a(v  -1) - 4e}{2a} &\geq \frac{3v - 2a - 1 -\sqrt{(v-2a+1)^2 + 16e}}{4}. \\
\end{align*}
Thus in this range of $a$, $e$ and $v$, if \cref{auu} is less than \cref{u_binom_p_2} ($v$ is not ruled out by these two equations from \cref{u_bounds}), then  \cref{puu_1} is also less than \cref{u_binom_p_2}.  Put in another way: for $a \geq 26$, when \cref{puu_1} and \cref{u_binom_p_2} give a bound on $v$, then the bound derived from \cref{auu} and \cref{u_binom_p_2} is tighter.\\

\noindent\underline{{\bf Case 2:} $e \geq (8a-6-2\sqrt{8a-7})/4$}\\
We subdivide this case by the magnitude of $a$. If $a=1$, implying $e = 0$, then the two bounds from \cref{puu_1} and \cref{u_binom_p_2}  give $v \geq 7$. If $a=2$ and $e=0$, then the two bounds give $v \geq 11$.  When $a=3$ and $e=0$, then the two bounds give $v \geq 13$.  Since \cref{puu_1} has a negative derivative with respect to $e$, this means that for any larger $e$ the bounds are positively separated for $a=2$ and $v \geq 11$ or $a=3$ and $v \geq 13$. This leaves only a finite number of cases to check when $a = 2,3$ and $e >0$, which are left to Case~3. For $a\geq 4$ we proceed as follows: The partial derivative of \cref{puu_1} with respect to $v$ is
\[
  \frac{3}{4} + \frac{-(v-2a+1)}{4\sqrt{(v-2a+1)^2 + 16e}},
\]
while the partial derivative of \cref{u_binom_p_2} with respect to $v$ is
\[
  1 - \frac{2}{\sqrt{8(v-a)+1}},
\]
and thus the upper bound grows faster than the lower bound in $v$ whenever 
\begin{multline}\label{UB_grows_faster}
  8 \sqrt{(v-2a+1)^2+16e} \\
  <\sqrt{8(v-a)+1}\sqrt{(v-2a+1)^2+16e} + (v-2a+1)\sqrt{8(v-a)+1}.
\end{multline}
We first split into the cases $4 \leq a < 9$ and $a \geq 9$.\\

\noindent\underline{{\bf Case 2A:} $e \geq (8a-6-2\sqrt{8a-7})/4$ and $4 \leq a < 9$}\\
For $v < 4a$ we leave this finite set of values to Case 3. When $a \geq 4$ and $v \geq 4a$, then $\sqrt{8(v-a)+1} > 8$, \cref{UB_grows_faster} is true, and thus the derivative of \cref{u_binom_p_2} with respect to $v$ is larger than the derivative of \cref{puu_1}.  When $a \geq 4$ and $v=4a$, then \cref{puu_1} is smaller than \cref{u_binom_p_2} for any $e \geq 0$. Thus for all $a \geq 4$, $v \geq 4a$, and $e \geq 0$, \cref{puu_1} is always smaller than \cref{u_binom_p_2} and no additional constraint on $v$ is derived. \\

\noindent\underline{{\bf Case 2B:} $e \geq (8a-6-2\sqrt{8a-7})/4$ and $a \geq 9$}\\
When $a \geq 9$ and $v \geq 2a-1$, then $\sqrt{8(v-a)+1} > 8$, so \cref{UB_grows_faster} is true and the derivative of \cref{u_binom_p_2} with respect to $v$ is larger than the derivative of \cref{puu_1}. When $v = 2a-1$ and
\[
  e \geq \frac{8a-6-2\sqrt{8a-7}}{4},
\]
then \cref{puu_1} is smaller than \cref{u_binom_p_2}.  Thus for all $a\geq 9$, $v \geq 2a-1$, and $e \geq (8a-6-2\sqrt{8a-7})/4$, \cref{puu_1} is always smaller than \cref{u_binom_p_2} and no additional constraint on $v$ is derived.\\

\noindent\underline{{\bf Case 3:} Collected finite set of values $(a,e,v)$}\\
\noindent
The calculations in Cases~1 and~2 did not resolve the constraints from \cref{puu_1,u_binom_p_2} for the following ranges of $(a,e,v)$:
\begin{align*}
  a &<26 & e&<(8a-6-2\sqrt{8a-7})/4 & v&<2a +2 +\sqrt{8a+1} \\
  a &=2  & e&>0 & v&<11 \\
  a &=3  & e&>0 & v&<13 \\
  a &<9  & e&\geq(8a-6-2\sqrt{8a-7})/4 & v&<4a
\end{align*}
Directly calculating \cref{auu,puu_1,u_binom_p_2} for these finite sets, we find $60$ triples $(a,e,v)$ which are ruled out by \cref{puu_1,u_binom_p_2}, but permitted by \cref{auu,u_binom_p_2}. These are shown in \cref{5_8_not_4_8_all_a}.

\begin{table}[tb!]
  \centering
  \vspace{-2cm}
    \begin{tabular}{|c|c|c|}
    \hline
    $a$ & $e$ & $v$ \\
    \hline
    2   & 1 & 5--8\\
        \hline
    3   & 1 & 11,12\\
       & 2 & 8--10\\
       & 3 & 7,8\\
        \hline
    4   & 1 & 14\\
       & 2 & 13\\
       & 3 & 10--12\\
       & 4 & 8--11\\
        \hline
    5   & 2 & 16 \\
       & 3 & 15,16 \\
       & 4 & 13,14 \\
       & 5 & 11--13 \\
\hline
  \multicolumn{3}{c}{}\\
    \multicolumn{3}{c}{}\\
    \multicolumn{3}{c}{}\\
    \end{tabular}    \hspace{2cm}
    \begin{tabular}{|c|c|c|}
    \hline
    $a$ & $e$ & $v$ \\
    \hline
    6   & 1 & 20 \\
       & 2 & 19 \\
       & 3 & 18 \\
       & 4 & 17,18 \\
       & 5 & 16,17 \\
       & 6 & 15 \\
       & 7 & 13,14 \\ 
      \hline
    7   & 2 & 22 \\
       & 3 & 21 \\
       & 4 & 20 \\
       & 5 & 19,20 \\
       & 6 & 19 \\
       & 7 & 17,18 \\
       & 8 & 16 \\
\hline
      \multicolumn{3}{c}{}\\
    \end{tabular}    \hspace{2cm}
    \begin{tabular}{|c|c|c|}
  \hline
    $a$ & $e$ & $v$ \\
    \hline
    8   & 3 & 24 \\
       & 4 & 23 \\
       & 6 & 22 \\
       & 7 & 21 \\
       & 8 & 20 \\
       & 9 & 19 \\
    \hline
    9   & 1 & 28 \\
       & 4 & 26 \\
       & 6 & 25 \\
       & 7 & 24 \\
    \hline
    10   & 2 & 30 \\
       & 7 & 27 \\
    \hline
    12   & 4 & 34 \\ 
      \hline
    \multicolumn{3}{c}{}\\
    \multicolumn{3}{c}{}\\
    \end{tabular}
  \caption{Values of $a$, $e$ and $v$ forbidden by \cref{puu_1,u_binom_p_2} but permitted by \cref{4_and_8} for $v \geq 2a-1$.\label{5_8_not_4_8_all_a}}
\end{table}

\renewcommand{\arraystretch}{1.5}
\begin{table}[tb!]
\begin{center}
\begin{tabular}{|c|c|c|}\hline
Lower Bound on $u$&Upper Bound on $u$&Constraints on $v$ \\\hline
0&~\eqref{puu_2}& None (given $v \geq a+3$)\\\hline
  0&~\eqref{pau}&None (given $v \geq 2a-1$)\\\hline
0&~\eqref{u_binom_p_2}&None (given $v \geq a+3$)\\\hline
~\eqref{chi_K_p}&~\eqref{puu_2}&None\\\hline
~\eqref{chi_K_p}&~\eqref{pau}&$v \geq a-1+\frac{4e}{a}$\\\hline
~\eqref{chi_K_p}&~\eqref{u_binom_p_2}&$v \geq a+3$\\\hline
~\eqref{auu}&~\eqref{puu_2}&None (given $v \geq 2a-1$)\\\hline
~\eqref{auu}&~\eqref{pau}& $v \geq 2a-1$\\\hline
~\eqref{auu}&~\eqref{u_binom_p_2}&$v \geq   2a + 2 - \frac{4e}{a} + \sqrt{8a + 1 - \frac{32 e}{a}}$ \\
&&(given $v \geq 2a-1$)\\\hline
~\eqref{puu_1}&~\eqref{puu_2}&None\\\hline
~\eqref{puu_1}&~\eqref{pau}&$v \geq a-1+\frac{4e}{a}$\\\hline
  ~\eqref{puu_1}&~\eqref{u_binom_p_2}& Finite number of forbidden values \\
  && See \cref{5_8_not_4_8_all_a} \\\hline
\end{tabular}
\caption{Summary of constraints on $v$ given upper and lower bounds on $u$.}\label{tab: summary table}
\end{center}
\end{table}
\renewcommand{\arraystretch}{1}

We have now considered all pairs of lower and upper bounds.  The constraints on $v$ that they imply are shown in \cref{tab: summary table}. Except for the finite number of triples $(a,e,v)$ shown in \cref{5_8_not_4_8_all_a}, the bounds that are not always exceeded by some others are
\begin{align*}
  v &\geq a -1 + \frac{4e}{a}, \\
  v &\geq 2a-1, \\
  v &\geq 2a + 2 - \frac{4e}{a} + \sqrt{8a+1 -\frac{32e}{a}}, \\
  v &\geq a+3.
\end{align*}
The ranges of values $(a,e)$ for which each of these bounds is the tightest are found by comparing them in pairs. These are summarized in \cref{eqn_v_bounds}. None of the values in \cref{5_8_not_4_8_all_a} are ruled out by any of these four bounds on $v$, thus they are the full set of exceptional values forbidden by \cref{puu_1,u_binom_p_2}.

\end{proof}
We note that we have not taken into account the fact that the total number of points $v$ is restricted as we are playing on an $\STS$, i.e.\ $v\equiv 1,3 \pmod 6$.  For example, when taking these constraints into consideration, then $v \geq 2a-1$ becomes
\[
  v \geq \begin{cases} 2a+1 & a \equiv 0 \pmod 3 \\ 2a-1 & a \equiv 1,2 \pmod 3 \end{cases}
\]
and $v \geq a+3$ becomes
\[
  v \geq \begin{cases} a+3 & a \equiv 1,3 \pmod 6 \\ a+6 & a \equiv 1 \pmod 6 \\ a+5 & a \equiv 2 \pmod 6 \\ a+4 & a \equiv 3,5 \pmod 6 \end{cases}.
  \]
The other two bounds do not have simple closed form expressions because they depend on the ratio of $e$ and $a$ and one contains a square root.

We have also not taken into account that the number of unplayable vertices, $u$, must be an integer.  For example $(a,e,v) = (7,7,19)$ is permitted by \cref{v_bounds}, but cannot actually happen even though the lower bound on $u$ from \cref{auu}, $21/2 -\sqrt{37}/2$, is strictly smaller than the upper bound on $u$ from \cref{u_binom_p_2}, $ 25/2  -\sqrt{97}/2$.  In this case, since $u$ is a non-negative integer, these bounds give $8 \leq u \leq 7$, which is a contradiction.  Finally the bounds on $v$ in \cref{v_bounds} also ignore \cref{chi_g} and \cref{chi_comp_g} from \cref{u_bounds}, but they have the advantage of being simple to compute if only $a$ and $e$ are known.

Taking into account that $v \equiv 1,3 \pmod 6$ and that $u$ is a non-negative integer changes the set of exceptional triples $(a,e,v)$ to those shown in \cref{reduced_list_of_exceptions}.  Note that this set is not a subset of the triples from \cref{5_8_not_4_8_all_a} because the restriction that $u$ is a non-negative integer tightens both the bound from \cref{puu_1} and \cref{u_binom_p_2} and the bound from \cref{auu} and \cref{u_binom_p_2}.
\begin{table}[tb!]
\centering
  \begin{tabular}{|c|c|c|}
    \hline
    $a$ & $e$ & $v$ \\
    \hline
    2   & 1 & 7,9\\
        \hline
    3   & 2 & 9\\
       & 3 & 7\\
        \hline
    4   & 2 & 13,15\\
       & 3 & 13,15\\
       & 4 & 9,15\\
       & 5 & 9\\
        \hline
    5   & 3 & 15 \\
       & 5 & 13 \\
       & 6 & 13 \\
       & 7 & 13 \\
    \hline
      \multicolumn{3}{c}{}\\
      \multicolumn{3}{c}{}\\
      \multicolumn{3}{c}{}\\
      \multicolumn{3}{c}{}\\
\end{tabular}\hspace{2cm}
    \begin{tabular}{|c|c|c|}
    \hline
      $a$ & $e$ & $v$ \\
\hline
        6   & 3 & 19 \\
       & 4 & 19 \\
       & 6 & 15 \\
       & 7 & 15 \\ 
       & 9 & 13 \\
       & 10 & 13 \\ 
       & 11 & 13 \\
    \hline
    7   & 4 & 21 \\
       & 7 & 19 \\
       & 8 & 19 \\
       & 9 & 19 \\
       & 11 & 15 \\
\hline
      \multicolumn{3}{c}{}\\
      \multicolumn{3}{c}{}\\
      \multicolumn{3}{c}{}\\
\end{tabular}\hspace{2cm}
    \begin{tabular}{|c|c|c|}
    \hline
      $a$ & $e$ & $v$ \\
    \hline
    8   & 4 & 25 \\
       & 5 & 25 \\
       & 8 & 21 \\
       & 9 & 21 \\
       & 12 & 19 \\
       & 13 & 19 \\
       & 14 & 19 \\
    \hline
    9   & 5 & 27 \\
       & 9 & 25 \\
       & 10 & 25 \\
       & 11 & 25 \\
       & 14 & 21 \\
    \hline
    10   & 10 & 27 \\
       & 11 & 27 \\
    \hline
    11   & 17 & 27 \\
    \hline
  \end{tabular}
  \caption{Values of $a$, $e$ and $v$ forbidden by \cref{puu_1} and \cref{u_binom_p_2} but permitted by \cref{4_and_8} for $v \geq 2a-1$ taking into account that $v \equiv 1,3 \pmod 6$ and $u$ is integral.\label{reduced_list_of_exceptions}}
\end{table}

\cref{v_bounds} allows us to understand something we encountered in \cref{sec_nim values}: why we could not have encountered the graph $P_3 \dot\cup K_1$ in any of the Steiner triple systems of orders 15 or smaller.  This graph has $a=4$ and $e=2$.  For $a=4$, $e = 2 < (-a+\sqrt{8a^3-7a^2})/4$.  \cref{v_bounds} shows that $v \geq 2a + 2 - 4e/a + \sqrt{8a+1-32e/a} = 8 + \sqrt{17} \approx 12.12$ which shows that $v \geq 13$ is a necessary condition.  But $v = 13$ is eliminated by \cref{puu_1,u_binom_p_2} as shown in \cref{5_8_not_4_8_all_a}.  If this graph could be embedded in an $\STS(15)$ we turn to \cref{u_bounds} and find that \cref{puu_1} shows that $u \geq 6.55$ and \cref{u_binom_p_2} shows that $u \leq 6.78$.  These two bounds can be satisfied for real valued $u$ but they are impossible because $u$ is a non-negative integer.  Thus an $\STS(19)$ is the smallest Steiner triple system in which this graph can be embedded and our experiments found an instance of this graph embedded in an $\STS(19)$.

%%%%%%%%%%%%%%%%%%%%%%%%%%%%%%%%%%%%%%%%%%%%%%%%%%%%%
%%%%%%%%%%%%%%%%%%%%%%%%%%%%%%%%%%%%%%%%%%%%%%%%%%%%%
%%%%%%%%%%%%%%%%%%%%%%%%%%%%%%%%%%%%%%%%%%%%%%%%%%%%%
%%%%%%%%%%%%%%%%%%%%%%%%%%%%%%%%%%%%%%%%%%%%%%%%%%%%%
%%%%%%%%%%%%%%%%%%%%%%%%%%%%%%%%%%%%%%%%%%%%%%%%%%%%%
%%%%%%%%%%%%%%%%%%%%%%%%%%%%%%%%%%%%%%%%%%%%%%%%%%%%%
%%EMBEDDINGS GRAPHS
%%%%%%%%%%%%%%%%%%%%%%%%%%%%%%%%%%%%%%%%%%%%%%%%%%%%%
%%%%%%%%%%%%%%%%%%%%%%%%%%%%%%%%%%%%%%%%%%%%%%%%%%%%%
%%%%%%%%%%%%%%%%%%%%%%%%%%%%%%%%%%%%%%%%%%%%%%%%%%%%%

\section{Sufficient conditions for embedding a graph in an $\STS$}\label{sufficient}

\cref{sec_restrictions} gives restrictions on the sizes of $P$, $A$, and $U$ and required structures of some of the blocks in any embedding of a given graph in an $\STS$. These are necessary conditions for a graph to be embeddable in an $\STS$.  In this section we determine a sufficient condition for a graph to be embeddable in an $\STS$. In particular we will show that any graph can be embedded in a sufficiently large $\STS$. 

The general strategy for embedding a given graph $G$ in a large $\STS$ will be to choose sizes for $P$ and $U$ that satisfy the necessary conditions.  Then we pick triples with the properties that are required: all edges of $G$ appear in $PAA$, all non-edges of $G$ appear in $AAU$, all edges with $P$ are in $PPU$, and every vertex of $U$ appears in at least one block from $PPU$. From the complete graph $K_{P\cup A \cup U}$ we remove all edges that appear in any of these chosen triples.  If we can decompose the graph that remains into triples, we will have successfully embedded $G$ in an $\STS$.

A graph $G$ is {\em $C_3$-divisible} if the number of edges is a multiple of 3 and the degree of every vertex is even.  Because each triple uses three edges and contributes 2 to the degree of its points, being $C_3$-divisible is a necessary condition for a graph to be decomposable into triples. Barber \etal have proven that any sufficiently large graph of  high minimum degree which is $C_3$-divisible can be decomposed into triangles \cite{MR3436388}.

\begin{theorem}[Theorem 1.4(ii) $l=3$, \cite{MR3436388}] \label{barber_thm}
  For each $\epsilon > 0$ there exists an $n_0 = n_0(\epsilon,3)$ such that every $C_3$-divisible graph $G$ on $n \geq n_0$ vertices with $\delta(G) \geq (9/10 + \epsilon)n$ has a $C_3$-decomposition.
  \end{theorem}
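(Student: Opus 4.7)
My plan is to prove this via the iterative absorption method, which has become the standard approach for edge-decomposition problems under minimum-degree conditions. The idea is to reserve a carefully chosen ``absorbing structure'' $A \subseteq E(G)$ ahead of time, find an approximate $C_3$-decomposition of $G - A$ via the nibble method, and then absorb the small $C_3$-divisible leftover into $A$ using $A$'s built-in flexibility.

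First, I would establish a cover-down lemma: any graph $H$ on $m$ vertices with $\delta(H) \geq (9/10 + \epsilon/2) m$ admits a triangle packing covering all but $o(m^2)$ edges. This is a standard nibble argument applied to the auxiliary $3$-uniform hypergraph whose vertices are the edges of $H$ and whose hyperedges are the edge-sets of triangles of $H$; the minimum-degree hypothesis guarantees the near-regularity and small codegree needed for a Pippenger--Spencer style result to apply, and it can be iterated to reduce the leftover to a set of edges of density $o(1)$.

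Second, I would construct the absorber. For each adjacent pair of edges $(e_1, e_2)$ I would build a constant-size gadget of triangles whose edge set $A(e_1, e_2)$ admits two triangle decompositions, one using $e_1$ but not $e_2$ and one using $e_2$ but not $e_1$. A carefully chosen edge-disjoint union of such gadgets, indexed by a suitable auxiliary graph on $E(G)$, yields a single absorber $A \subseteq E(G)$ with the property that for every sufficiently small $C_3$-divisible $L \subseteq E(G) \setminus A$ the combined set $A \cup L$ admits a $C_3$-decomposition. The high minimum degree is exactly what lets all the required gadgets be packed edge-disjointly inside $G$.

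Third, I would run a \emph{vortex}: a nested sequence $V_0 = V(G) \supseteq V_1 \supseteq \cdots \supseteq V_\ell$ with $|V_{i+1}| = \lfloor \beta |V_i| \rfloor$ for a small constant $\beta$, chosen by a random-partition argument so that the minimum-degree bound is inherited at every level. At each level I would apply the cover-down lemma to push the uncovered edges almost entirely into $V_{i+1}$, and at the final constant-size level $V_\ell$ I would invoke the absorber prepared in step two to finish. The main obstacle, and the step I expect to be most delicate, is maintaining $C_3$-divisibility at each transition to the next vortex level: the leftover subgraph at step $i$ is not automatically even-regular nor does its edge count have to be divisible by three. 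Handling this requires interleaving the cover-down with a divisibility-correction routine that uses a small reserve of preselected triangle configurations to adjust vertex-degree parities and the edge count mod $3$. This correction step is where the full $9/10 + \epsilon$ threshold, as opposed to a weaker minimum-degree bound, ultimately gets used, and optimising the gadget sizes here is the main technical hurdle.
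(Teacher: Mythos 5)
This statement is not proved in the paper at all: it is quoted verbatim as Theorem~1.4(ii) (with $l=3$) of Barber, K\"uhn, Lo and Osthus \cite{MR3436388} and used as a black box, so the only thing to compare your proposal against is that original source. Your outline does correctly name the strategy used there --- iterative absorption via a vortex, a nibble-based cover-down step, and gadget absorbers for a $C_3$-divisible leftover --- so you have identified the right framework.

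However, what you have written is a plan, not a proof. Every one of your three steps is deferred: the cover-down lemma is asserted to follow from a ``standard nibble argument,'' the absorber is asserted to exist after ``a carefully chosen edge-disjoint union of gadgets,'' and the divisibility bookkeeping in the vortex is acknowledged as ``the main technical hurdle'' and then not addressed. Each of these is a substantial technical result occupying a large part of the cited paper, and none is carried out or even reduced to a precisely stated lemma here. There is also a substantive misattribution: the constant $9/10$ does not come from the parity/divisibility correction, as you suggest, but from the cover-down argument and its relationship to the fractional decomposition threshold; the divisibility adjustments work at much weaker degree conditions. As it stands the proposal establishes nothing beyond the (correct) observation that iterative absorption is the method of choice; if the goal were to actually supply a proof of \cref{barber_thm}, essentially all of the work remains to be done.
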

  
We use \cref{barber_thm} to show that every graph can be embedded in a Steiner triple system for all sufficiently large orders.
\begin{theorem}\label{thm_asymp}
  Let $G$ be a graph.  There exists a $v_G$ such that for all $v \geq v_G$ with $v \equiv 1,3 \pmod 6$ an $\STS(v)$ exists with $G$ as a follower.
  \end{theorem}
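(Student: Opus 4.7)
The plan follows the strategy sketched at the start of the section: for each sufficiently large admissible $v$, I build a partial triple system on a vertex set $V = P \cup A \cup U$ with $A = V(G)$ that forces the desired configuration, then apply \cref{barber_thm} to complete it into an $\STS(v)$. Write $a = |V(G)|$, $e = |E(G)|$, and fix a small $\epsilon > 0$. For $v \equiv 1,3 \pmod 6$ sufficiently large, take $p = \lceil \sqrt{2v}\,\rceil + 1$ and $u = v - a - p$, so that $p \geq \Delta(G)+1 \geq \chi'(G)$, $u \geq \Delta(\overline{G})+1 \geq \chi'(\overline{G})$, and $\chi'(K_p) \leq u \leq \binom{p}{2}$.

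Three edge-colourings give the reserved blocks. A proper edge-colouring $\phi\colon E(G) \to P$ yields the $PAA$-blocks $\{a_i,a_j,\phi(a_i,a_j)\}$, one per edge of $G$; a proper edge-colouring $\psi\colon E(\overline{G}) \to U$ yields the $AAU$-blocks analogously. For the $PPU$-blocks I take any proper edge-colouring of $K_p$ using exactly $u$ colours---obtainable by starting from a $1$-factorisation of $K_p$ and repeatedly splitting a colour class into two non-empty matchings, valid since $\chi'(K_p) \leq u \leq \binom{p}{2}$---and declare the colour of an edge $\{p_k,p_\ell\}$ to be the vertex $u_c \in U$ completing the block $\{p_k,p_\ell,u_c\}$. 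Every $u_c$ then lies on at least one $PPU$-block (so will be unplayable), and the matching property of each colour class prevents any $PU$-pair from being reused. The three families are pairwise edge-disjoint because $PP$-pairs, $AA$-edges of $G$, and $AA$-non-edges of $G$ are disjoint pair-types and each colouring is proper.

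Let $L$ be the leave in $K_v$ after removing the edges of all reserved blocks. Since removing edge-disjoint triangles preserves all degree parities and the edge-count modulo $3$, and $v \equiv 1,3 \pmod 6$ already makes $K_v$ itself $C_3$-divisible, $L$ is $C_3$-divisible. For the minimum-degree bound: an $A$-vertex lies in exactly $a-1$ reserved blocks; a $P$-vertex in at most $\lfloor a/2\rfloor$ $PAA$-blocks (one per colour class of $\phi$ containing it) plus $p-1$ $PPU$-blocks; a $U$-vertex in at most $\lfloor a/2\rfloor$ $AAU$-blocks plus $\lfloor p/2\rfloor$ $PPU$-blocks. With $p = \Theta(\sqrt v)$, every vertex loses only $O(\sqrt v)$ edges, so $\delta(L) \geq v - 1 - O(\sqrt v) \geq (9/10+\epsilon)v$ once $v$ is large enough. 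Applying \cref{barber_thm} to $L$ produces a $C_3$-decomposition whose union with the reserved blocks is the desired $\STS(v)$.

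It remains to verify that $G$ is genuinely a follower of this $\STS(v)$. By construction, the only blocks containing two points of $P$ are $PPU$-blocks; every vertex of $U$ lies on such a block and no vertex of $A$ does, so playing all of $P$ makes the unplayable set exactly $U$ and the available set exactly $A$. The hyperedges of the available hypergraph on $A$ come from blocks with one $P$-point and two $A$-points, i.e., the $PAA$-blocks, which are in bijection with $E(G)$; since no $AAA$-block exists (every $AA$-pair sits in a $PAA$- or $AAU$-block), the available hypergraph is exactly $G$. A legal play order on $P$ exists because the unique block through any two points of $P$ is a $PPU$-block whose third point lies in $U$, so no prefix of $P$ ever contains a block. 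The main obstacle in the plan is the tension in the choice of $p$: it must be large enough that $\binom{p}{2} \geq u$ so $U$ can be covered by $PPU$-blocks, yet small enough that $P$-vertices keep degree at least $(9/10+\epsilon)v$ in $L$; the asymptotic choice $p \sim \sqrt{2v}$ resolves both constraints simultaneously.
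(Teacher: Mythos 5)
Your proposal is correct and follows essentially the same route as the paper: reserve the $PAA$, $AAU$, and $PPU$ blocks via three proper edge-colourings (of $G$, $\overline{G}$, and $K_p$ with a surjective colouring onto $U$), check that the leave of $K_v$ is $C_3$-divisible with minimum degree $v-O(\sqrt v)$, and complete it using \cref{barber_thm}. Your minor variations --- taking $p \sim \sqrt{2v}$ rather than $\sqrt{2(v-a)}$, using merely proper rather than injective colourings of $G$ and $\overline G$ (compensated by your slightly looser but still sufficient degree bounds), and obtaining the surjective colouring of $K_p$ by splitting classes of a $1$-factorisation rather than by equitable recolouring --- do not change the substance of the argument.
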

  \begin{proof}
    We will start by defining $v_G$, which is chosen to be large enough that we can both define some edge-colourings which we use to build blocks and satisfy the conditions of \cref{barber_thm}.  For any $v \geq v_G$ we will choose $p=|P|$ and $u=|U|$ so that all the necessary conditions from \cref{u_bounds} are satisfied.  We will construct all the blocks of $PAA$, $AAU$, and $PPU$ as required. Removing the edges from all these triples from $K_v$ will leave a $C_3$-divisible graph which is large enough and has degree high enough to use \cref{barber_thm}.
    
    Let $a = |V(G)|$, $e = |E(G)|$, and $b = \binom{a}{2} - e$, the number of edges in the complementary graph, $\overline{G}$.  Let $\epsilon >0$ and $n_0 = n_0(\epsilon,3)$ be the constant from \cref{barber_thm}. Let $c = (1/10 -\epsilon)$. Let
    \[
      v_G = \max \left (\frac{e(e +1)}{2} + a, a + b + 2 + \sqrt{2b+3}, 3\left (1+\frac{2}{c} \right )^2+a, \frac{2a-1}{c}, n_0\right).
    \]
Briefly, we will need $v \geq \frac{e(e +1)}{2} + a$ to construct an injective edge colouring of $G$ with no more than $p$ colours.  We will need $v \geq a + b + 2 + \sqrt{2b+3}$ to construct an injective edge colouring of $\overline{G}$ with no more than $u$ colours. We will use that $v \geq 3\left (1+\frac{2}{c} \right )^2+a$ and $v\geq \frac{2a-1}{c}$ to ensure two conditions.  The first is to build a surjective edge colouring of $K_P$ with exactly $u$ colours.  The second is to satisfy the degree condition from \cref{barber_thm}. Finally $v \geq n_0$ ensures that the graph is large enough to use \cref{barber_thm}.
    
Suppose that $v \geq v_G$ and $v \equiv 1,3 \pmod 6$. Set $m = v-a$, $p = \left \lceil \sqrt{2m} \right \rceil$, and $u=m-p$. Note that $\sqrt{2m} \leq p < \sqrt{2m} + 1$.  Let $P$ and $U$ be sets of $p$ and $u$ points respectively.

As in the proof of \cref{u_bounds}, the blocks of $PAA$ will determine a proper edge colouring of $G$ with $p$ colours and we will need this colouring to build the triples of $PAA$.  Since $v \geq \binom{e+1}{2} + a$, we have that $m \geq \binom{e+1}{2}$ and thus
    \begin{equation}
      p \geq e. \label{chi_G}
    \end{equation}
    Thus an injective  proper edge colouring $\phi: E(G) \rightarrow P$ exists. The injectivity will help us establish the degree condition.
    
    Similarly, determining the blocks of $AAU$ is equivalent to having a proper edge colouring of $\overline{G}$. Since $v \geq a + b+2 + \sqrt{2b+3}$, we have that $m -(b+2) \geq \sqrt{2b+3}$.  This implies that $(m-(b+2))^2 \geq 2b+3$ or, equivalently, $m^2 + (-2b-4)m + (b+1)^2 \geq 0$.  From this we have $m^2 +(-2b-2)m + (b+1)^2 \geq 2m$ and thus $(m-(b+1))^2\geq 2m$.  Taking the square root of both sides we get $m -(b+1) \geq \sqrt{2m} > p-1$. In turn this means that
    \begin{equation}
      u > b, \label{chi_G_comp}
    \end{equation}
    which guarantees that an injective proper edge colouring $f: E(\overline{G}) \rightarrow U$ exists. Injectivity is again used to prove the needed degree condition from \cref{barber_thm}.

    Finally, for the $PPU$ triples we need a proper edge colouring of $K_{P}$ with $u$ colours.  But since every point in $U$ must appear in a triple of $PPU$, every colour class from the edge colouring must contain at least one edge. First note that $c < 1/10$ means that $1/c > 10$ and thus $v \geq 3(1+2/c)^2+ a > 1323 + a$.  So $m = v-a > 1323 > 32$.  If $m \geq 32$, then $m \geq 4 \sqrt{2m}$ and $m \geq (3m + 4\sqrt{2m})/4$.  Now $m \geq 4 \sqrt{2m}$ and $m \geq 32$ give
    \begin{align*}
      4 + \sqrt{2m} &\leq m\\
      2m + 4 + 5 \sqrt{2m} &\leq 3m + 4\sqrt{2m}\\
      (\sqrt{2m}+1)(\sqrt{2m}+4) &\leq 3m + 4\sqrt{2m}\\
      p(p+3) &\leq 3m + 4\sqrt{2m}.
    \end{align*}
    This, together with $m \geq (3m + 4\sqrt{2m})/4$, gives that $m \geq (p^2+3p)/4$, which implies that
      \begin{equation}
        u > \frac{\binom{p}{2}}{2}. \label{chi_K_P_lb}
      \end{equation}
      Finally,
    \begin{align*}
      p+ \binom{p}{2} &\geq \sqrt{2m} + \frac{\sqrt{2m}(\sqrt{2m}-1)}{2} \\
                      &= \frac{\sqrt{2m}(\sqrt{2m}+1)}{2} \\
      &\geq m,
    \end{align*}
    so that
    \begin{equation}
      u < \binom{p}{2}. \label{chi_K_P_ub}
      \end{equation}
      The chromatic index of $K_{P}$ is $p$ or $p-1$, depending on whether $p$ is odd or even.  Since $m \geq 32$, we have $p \geq \sqrt{2m} \geq \sqrt{64} = 8$ and thus $u \geq \binom{p}{2}/2 \geq p$ and a proper edge colouring of $K_P$ with $u$ colours exists. Note that this colouring may have empty colour classes. If the size of two colour classes differ by more than one, then the union of the two colour classes consists only of even cycles and paths and at least one path has odd length and begins and ends with an edge from the larger colour class.  Switching the edge colours on this path reduces the difference between the sizes of the colour classes.  By repeating this procedure we can obtain an equitable edge colouring where the sizes of any two colour classes differ by at most one \cite{west_introduction_2018}.  Since
      \[
        u \geq \frac{\binom{p}{2}}{2} \geq p,
      \]
      the largest colour class has size 2 and the smallest colour class is at least one.  Thus a map $g: E(K_{P}) \rightarrow U$ which is a surjective proper edge colouring exists.

      We now build the $\STS$ on point set $P \cup A \cup U$ where $A = V(G)$.  
    From the three proper edge colourings on $G$, $\overline{G}$, and $K_P$, define three sets of triples
    \begin{align*}
      PAA &= \{\{x,y,\phi(\{x,y\})\}: \{x,y\} \in E(G)\} \\
      AAU &= \{\{x,y,f(\{x,y\})\}: \{x,y\} \in E(\overline{G})\} \\
      PPU &= \{\{x,y,g(\{x,y\})\}: x,y \in P \}.
      \end{align*}
      The set $PAA$ only uses edges between $P$ and $A$ and all the edges of $G$. The set $AAU$ only uses edges between $U$ and $A$ and all the edges of $\overline{G}$. And the set $PPU$ only uses edges between $P$ and $U$ and all the edges within $P$.  Since $\phi$, $f$, and $g$ are proper edge colourings, all these triples are edge-disjoint.  Since $g$ is surjective, every point in $U$ is on a triple from $PPU$. We remove the edges of all these triples from $K_{P \cup A \cup U} = K_{v}$.  Since $v \equiv 1,3 \pmod 6$, if we remove a set of edge-disjoint triples from $K_v$, then the resulting graph is $C_3$-divisible. Thus the graph remaining on vertex set $P \cup A \cup U$ after removing the edges of these triples is $C_3$-divisible.  Since $v \geq n_0$, to invoke \cref{barber_thm}, we now only need to show that the degree of each vertex is at least $(9/10 + \epsilon)v$, or equivalently that the degree in $PPA \cup AAU \cup PPU$ is at most $cv-1$.

      For $x \in P$, $\deg(x) = 2(|\phi^{-1}(x)| + p -1) \leq 2p$ since $\phi$ is injective.  Because $v \geq 3(1+2/c)^2+a$, we have that $m \geq \sqrt{3/2} \sqrt{2m} (1+2/c)$, which gives $cu \geq 2p$.   Thus $cv -1 \geq cu \geq 2p$. For $x \in U$, $\deg(x) = 2(|f^{-1}(x)| + |g^{-1}(x)|) \leq 6$ since $f$ is injective and $g$ is equitable with $u \geq \binom{p}{2}/2$. Since $v \geq 3(1+2/c)^2 \geq 7/c$, we have $\deg(x) \leq cv -1$.  Finally for $x \in A$, $\deg(x) = 2(a-1)$ and the fact that $v \geq (2a-1)/c$ shows that the degree is small enough.  Thus \cref{barber_thm} holds and there exists an $\STS$ on $P \cup A \cup U$ containing the triples in $PAA$, $AAU$, and $PPU$.

      The position where exactly the points in $P$ have been played is a follower of the game on this $\STS$.  Because $g$ is surjective, every point $x \in U$ is on a block with two points from $P$ and therefore each point in $U$ cannot be played. Every pair of points  $x, y \in P$ is in a block $\{x,y,g(\{x,y\})\}$ with $g(\{x,y\}) \in U$, thus no points of $A$ are in a block with two points from $P$ and every point in $A$ can be played.    The set of triples $PAA$ forbids the vertices of any edge of $G$ from both being played.  The set of triples $AAU$ ensures that the only events preventing two non-adjacent vertices in $G$ from both being played are plays made in this position or after.  Thus the game tree of this position is isomorphic to the game tree of \nodekayles played on $G$.
\end{proof}

Every graph $G$ can be embedded in an $\STS$ such that \nofil on this $\STS$ contains a follower equivalent to \nodekayles played on $G$.  Since $n_0$ and $c$ are constants not depending on $G$, and the rest of the equations bounding $v_G$ from below are polynomial in the size of $G$, the size of the smallest $\STS$ containing an embedding of $G$ is polynomial in $G$.  Since determining the outcome of \nodekayles played on graphs is PSPACE-complete, we have the following.
\begin{corollary}\label{complexity}
  The complexity of determining the outcome class of \nofil on Steiner triple systems and the outcome classes of the followers of these games is PSPACE-complete.
  \end{corollary}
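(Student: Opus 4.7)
The plan is to prove the corollary via the standard two-step procedure: membership in PSPACE and PSPACE-hardness. For membership, I would note that a position of \nofil on an $\STS(v)$ is fully specified by $v$, the block list of the design, and the set $P$ of played points, all of polynomial size. Legal moves can be checked in polynomial time by scanning the block list for each candidate point, and a game ends after at most $v$ moves, so the usual recursive alternating algorithm for deciding the winner under normal play runs in polynomial space.

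For PSPACE-hardness, I would give a polynomial-time many-one reduction from \nodekaylespunct, which is PSPACE-complete by Schaefer's theorem \cite{schaefer_78}. Given a graph $G$, \cref{thm_asymp} produces an $\STS(v)$ with $v$ bounded by an explicit polynomial $v_G$ in $|V(G)|+|E(G)|$, together with a designated set $P$ of played points, such that the follower reached after playing $P$ has a game tree isomorphic to \nodekayles on $G$. The two positions therefore share their outcome classes and nim-values, so any algorithm deciding outcome classes of \nofil followers on Steiner triple systems also decides \nodekaylespunct. Since starting positions are themselves followers (with $P = \emptyset$), this combined with PSPACE membership yields PSPACE-completeness both for starting \nofil instances on $\STS$s and for their followers, as stated in the corollary.

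The main obstacle I foresee is verifying that the reduction is genuinely computable in polynomial time, rather than merely producing an output of polynomial size. The construction in \cref{thm_asymp} specifies the triples of $PAA$, $AAU$, and $PPU$ from explicit edge colourings of $G$, $\overline{G}$, and $K_P$, all of which can be built in polynomial time by standard greedy colouring algorithms. It then completes the design by invoking the Barber et al.\ triangle-decomposition theorem, which is existential in nature. To close this gap I would appeal to a polynomial-time algorithmic version of that theorem in the dense regime used here, or, failing that, enlarge $v_G$ slightly to land in a parameter range where a direct greedy or absorbing-set construction of the remaining triples is known to run in polynomial time. Since the residual graph is highly structured and arbitrarily dense by choosing $v$ large, I expect this obstacle to be technical rather than fundamental.
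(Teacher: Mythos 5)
Your overall route is the same as the paper's: PSPACE membership by the standard polynomial-depth alternating search, and PSPACE-hardness by reducing from \nodekaylespunct{} via the embedding of \cref{thm_asymp}, whose output has order polynomial in the size of $G$. Your observation that the paper's argument establishes only the polynomial \emph{size} of the target $\STS$ and not the polynomial-time \emph{computability} of the reduction is a fair refinement --- the completion step via the Barber et al.\ theorem is existential as stated --- and the remedy you propose (an algorithmic version of the triangle-decomposition theorem in the dense regime, or a constructive completion after enlarging $v_G$) is the right way to close that gap.

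The one genuine error is your final claim that the argument yields PSPACE-completeness ``for starting \nofil instances on $\STS$s'' because starting positions are followers with $P=\emptyset$. That inclusion runs in the wrong direction: it shows the starting-position problem is a \emph{special case} of the follower problem, so hardness of the latter does not transfer to the former. The reduction of \cref{thm_asymp} outputs a position with a nonempty prescribed set $P$ of played points, i.e.\ a proper follower, so what is proved is hardness of the decision problem whose instances range over all followers (equivalently, over pairs consisting of an $\STS$ and a legal played set). The corollary is worded for exactly this combined problem, and the paper's conclusion explicitly notes that the complexity of the problem restricted to initial positions remains open, since optimal play might avoid the hard embedded positions. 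You should drop the claim about starting positions or restate it for the combined problem only.
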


\section{Conclusion}

We have begun the exploration of the game \nofil played on Steiner triple systems.  For orders up to 15, by computing the nim-values and exploring the game trees, we have determined the outcome of the game when played using an optimal strategy.  For orders 19, 21, and 25 we have determined nim-values for a significant number of sampled Steiner triple systems.  Open questions include:  What is the set of nim-values encountered at the start of playing \nofil on all Steiner triple systems?  Is this set infinite? Do all possible nim-values occur? Do the nim-values for designs with larger block sizes behave differently than $\STS$s?  The experiments on $\STS$s prompt some more specific questions.  How strong is the apparent pattern that odd nim-values predominate when $v \equiv 1 \pmod 6$ and even nim-values predominate when $v \equiv 3 \pmod 6$?  

Counting the allocation of played, available, and unplayable points in the blocks of an $\STS$, we derived necessary conditions for a graph $G$ to be embedded in an $\STS$.  We showed that a weaker set of conditions is sufficient for $G$ to be embedded.  The embeddings constructed in \cref{thm_asymp} are in Steiner triple systems which may be much larger than the bounds from \cref{v_bounds} require.  Some initial experiments with various small graphs seem to indicate that the bounds from \cref{u_bounds} and \cref{v_bounds} are tight or very close to being tight.  The current focus of our research is determining minimal embeddings for some natural families of graphs.

Finally, it is possible that it could be proven that the optimal lines of play can always avoid the graphs that determine that \nodekayles is PSPACE-complete.  If this were true, then the PSPACE status of the initial games on Steiner triple systems would be undetermined.  Therefore stronger methods of analyzing the complexity of \nofil on Steiner triple systems and designs in general is of significant interest.

%%%%%%%%%%%%%%%%%%%%%%%%%%%%%%%%%%%%%%%%%%%%%%%%%%%%%
%%%%%%%%%%%%%%%%%%%%%%%%%%%%%%%%%%%%%%%%%%%%%%%%%%%%%
%%%%%%%%%%%%%%%%%%%%%%%%%%%%%%%%%%%%%%%%%%%%%%%%%%%%%
%%%%%%%%%%%%%%%%%%%%%%%%%%%%%%%%%%%%%%%%%%%%%%%%%%%%%
%%%%%%%%%%%%%%%%%%%%%%%%%%%%%%%%%%%%%%%%%%%%%%%%%%%%%
%%%%%%%%%%%%%%%%%%%%%%%%%%%%%%%%%%%%%%%%%%%%%%%%%%%%%
%%REFERENCES
%%%%%%%%%%%%%%%%%%%%%%%%%%%%%%%%%%%%%%%%%%%%%%%%%%%%%
%%%%%%%%%%%%%%%%%%%%%%%%%%%%%%%%%%%%%%%%%%%%%%%%%%%%%
%%%%%%%%%%%%%%%%%%%%%%%%%%%%%%%%%%%%%%%%%%%%%%%%%%%%%
%\bibliographystyle{abbrv}
\bibliographystyle{plain}
\bibliography{refs}

\begin{thebibliography}{10}

\bibitem{ANW}
M.~H. Albert, R.~J. Nowakowski, and D.~Wolfe.
\newblock {\em Lessons in {Play}: {An} {Introduction} to {Combinatorial} {Game}
  {Theory}}.
\newblock A K Peters/CRC Press, Boca Raton, Florida, second edition, 2019.

\bibitem{MR584402}
L.~Babai.
\newblock Almost all {S}teiner triple systems are asymmetric.
\newblock volume~7, pages 37--39. 1980.
\newblock Topics on Steiner systems.

\bibitem{MR3436388}
B.~Barber, D.~K\"{u}hn, A.~Lo, and D.~Osthus.
\newblock Edge-decompositions of graphs with high minimum degree.
\newblock {\em Adv. Math.}, 288:337--385, 2016.

\bibitem{bodlaender_93}
H.~L. Bodlaender.
\newblock Kayles on special classes of graphs—an application of
  {Sprague}-{Grundy} theory.
\newblock In {\em Graph-theoretic concepts in computer science
  ({Wiesbaden}-{Naurod}, 1992)}, volume 657 of {\em Lecture {Notes} in
  {Comput}. {Sci}.}, pages 90--102. Springer, Berlin, 1993.

\bibitem{bodlaender_02}
H.~L. Bodlaender and D.~Kratsch.
\newblock Kayles and nimbers.
\newblock {\em Journal of Algorithms. Cognition, Informatics and Logic},
  43(1):106--119, 2002.

\bibitem{bodlaender_11}
H.~L. Bodlaender and D.~Kratsch.
\newblock Exact algorithms for {Kayles}.
\newblock In {\em Graph-theoretic concepts in computer science}, volume 6986 of
  {\em Lecture {Notes} in {Comput}. {Sci}.}, pages 59--70. Springer,
  Heidelberg, 2011.

\bibitem{BB2012}
A.~Bonato and A.~Burgess.
\newblock Cops and robbers on graphs based on designs.
\newblock {\em J. Combin. Des.}, 21(9):404--418, 2013.

\bibitem{BHM20}
A.~Bonato, M.~A. Huggan, and T.~G. Marbach.
\newblock The localization number of designs.
\newblock {\em J. Combin. Des.}, 29(3):175--192, 2021.

\bibitem{Bouton_02}
C.~L. Bouton.
\newblock Nim, a game with a complete mathematical theory.
\newblock {\em Ann. of Math. (2)}, 3(1-4):35--39, 1901/02.

\bibitem{CarrollDougherty}
M.~T. Carroll and S.~T. Dougherty.
\newblock Tic-tac-toe on a finite plane.
\newblock {\em Mathematics Magazine}, 77(4):260--274, 2004.

\bibitem{casse_projective_2006}
R.~Casse.
\newblock {\em Projective geometry: an introduction}.
\newblock Oxford University Press, Oxford, 2006.

\bibitem{Handbook}
C.~J. Colbourn and J.~H. Dinitz, editors.
\newblock {\em Handbook of combinatorial designs}.
\newblock Discrete {Mathematics} and its {Applications} ({Boca} {Raton}).
  Chapman \& Hall/CRC, Boca Raton, FL, second edition, 2007.

\bibitem{Demaine_09}
E.~D. Demaine and R.~A. Hearn.
\newblock Playing games with algorithms: {Algorithmic} combinatorial game
  theory.
\newblock In M.~H. Albert and R.~J. Nowakowski, editors, {\em Games of {No}
  {Chance} 3}, Mathematical {Sciences} {Research} {Institute} {Publications},
  pages 3--56. Cambridge University Press, Cambridge, 2009.

\bibitem{MR1409812}
J.~D. Dixon and B.~Mortimer.
\newblock {\em Permutation groups}, volume 163 of {\em Graduate Texts in
  Mathematics}.
\newblock Springer-Verlag, New York, 1996.

\bibitem{fleischer_06}
R.~Fleischer and G.~Trippen.
\newblock Kayles on the {Way} to the {Stars}.
\newblock In H.~J. van~den Herik, Y.~Björnsson, and N.~S. Netanyahu, editors,
  {\em Computers and {Games}}, Lecture {Notes} in {Computer} {Science}, pages
  232--245. Springer Berlin Heidelberg, 2006.

\bibitem{grundy_39}
P.~M. Grundy.
\newblock Mathematics and games.
\newblock {\em Eureka}, 2:6--8, 1939.

\bibitem{sopena_09}
A.~Guignard and \'{E}. Sopena.
\newblock Compound {Node}-{Kayles} on paths.
\newblock {\em Theoretical Computer Science}, 410(21-23):2033--2044, 2009.

\bibitem{Huggan2015}
M.~Huggan.
\newblock Impartial intersection restriction games.
\newblock Master's thesis, Carleton University, 2015.

\bibitem{MR309807}
M.~Lorea.
\newblock Ensembles stables dans les hypergraphes.
\newblock {\em C. R. Acad. Sci. Paris S\'{e}r. A-B}, 275:A163--A165, 1972.

\bibitem{schaefer_78}
T.~J. Schaefer.
\newblock On the complexity of some two-person perfect-information games.
\newblock {\em Journal of Computer and System Sciences}, 16(2):185--225, 1978.

\bibitem{Siegel}
A.~N. Siegel.
\newblock {\em Combinatorial {Game} {Theory}}, volume 146 of {\em Graduate
  {Studies} in {Mathematics}}.
\newblock American Mathematical Society, Providence, R.I., 2013.

\bibitem{sprague_36}
R.~P. Sprague.
\newblock \"{U}ber mathematische {K}ampfspiele.
\newblock {\em T\^{o}hoku Math. J.}, 41:438--444, 1935-36.

\bibitem{sprague_37}
R.~P. Sprague.
\newblock \"{U}ber zwei {A}barten von {N}im.
\newblock {\em T\^{o}hoku Math. J.}, 43:351--359, 1937.

\bibitem{MR833797}
D.~R. Stinson.
\newblock Hill-climbing algorithms for the construction of combinatorial
  designs.
\newblock In {\em Algorithms in combinatorial design theory}, volume 114 of
  {\em North-Holland Math. Stud.}, pages 321--334. North-Holland, Amsterdam,
  1985.

\bibitem{sagemath}
{The Sage Developers}.
\newblock {\em {S}ageMath, the {S}age {M}athematics {S}oftware {S}ystem
  ({V}ersion 9.2)}, 2021.
\newblock {\tt https://www.sagemath.org}.

\bibitem{west_introduction_2018}
D.~B. West.
\newblock {\em Introduction to graph theory}.
\newblock Pearson, New York, NY, 2018.

\end{thebibliography}
%%%%%%%%%%%%%%%%%%%%%%%%%%%%%%%%%%%%%%%%%%%%%%%%%%%%%
%%%%%%%%%%%%%%%%%%%%%%%%%%%%%%%%%%%%%%%%%%%%%%%%%%%%%
%%%%%%%%%%%%%%%%%%%%%%%%%%%%%%%%%%%%%%%%%%%%%%%%%%%%%
%%%%%%%%%%%%%%%%%%%%%%%%%%%%%%%%%%%%%%%%%%%%%%%%%%%%%
%%%%%%%%%%%%%%%%%%%%%%%%%%%%%%%%%%%%%%%%%%%%%%%%%%%%%
%%%%%%%%%%%%%%%%%%%%%%%%%%%%%%%%%%%%%%%%%%%%%%%%%%%%%

\end{document}